\theoremstyle{plain} 
\newtheorem{theorem}{Theorem}
\newtheorem{proposition}{Proposition}
\newtheorem{lemma}{Lemma}
\newtheorem{dualtheorem}{Duality Theorem}
\newtheorem{maintheo}{Main Theorem} 
\newtheorem{mainlemma}{Main Lemma}
\newtheorem{PJf}{Classical Poisson\,--\,Jensen formula}
\newtheorem{SymPJf}{Symmetrization of the classical Poisson\,--\,Jensen formula}
\newtheorem{PJfASJ}{Poisson\,--\,Jensen formula for Arens\,--\,Singer and Jensen  measures}
\newtheorem{PJfASJV}{Poisson\,--\,Jensen formula for Arens\,--\,Singer and Jensen  potentials}
\theoremstyle{definition}
\newtheorem{remark}{Remark}
\newcommand{\RR}{\mathbb{R}} 
\newcommand{\NN}{\mathbb{N}}
\newcommand{\pt}{{\rm pt}} 
\newcommand{\dd}{\,{\rm d}}
\DeclareMathOperator{\infill}{\rm in-fill}
\DeclareMathOperator{\clos}{clos} 
\DeclareMathOperator{\Int}{int}
\DeclareMathOperator{\Meas}{Meas}
\DeclareMathOperator{\har}{har}
\DeclareMathOperator{\comp}{cmp}
\DeclareMathOperator{\sbh}{sbh}
\DeclareMathOperator{\supp}{supp} 
\DeclareMathOperator{\sgn}{sgn} 
\DeclareMathOperator{\Borel}{Bor}
\DeclareMathOperator{\Dom}{Dom}
\DeclareMathOperator{\Conn}{Conn}
\title{Poisson\,--\,Jensen Formulas and Balayage of Measures}
\author{Bulat Khabibullin}
\email{khabib-bulat@mail.ru}
\begin{document}

\maketitle

\begin{abstract}
Our main results are certain developments of the classical Poisson\,--\,Jensen formula for subharmonic functions. The basis of the classical Poisson\,--\,Jensen formula is the natural duality between harmonic measures and Green's functions. Our generalizations use some  duality between the balayage of measures and and their potentials. 
\end{abstract}
\keywords{subharmonic function, balayage, potential, Riesz measure, Jensen measure}

\tableofcontents

\section{Introduction}\label{Int}

\subsection{\sf On the classical Poisson-Jensen formula}

Let $D$\/ be a {\it bounded domain\/} in the $d$-dimensional Euclidean space  $\RR^d$ with the {\it closure\/} $\clos D$ in $\RR^d$ and the {\it boundary\/} $\partial D$ in $\RR^d$. Then, for any $x\in D$  there are the {\it extended harmonic measure $\omega_D(x, \cdot)$ for  $D$ at $x\in D$\/} 
as a Borel probability measure on $\RR^d$ with support on
$\partial D$ and the {\it generalized Green's function $g_D(\cdot , x )$  for $D$ with pole at $x$\/}  extended by zero values on  the complement $\RR^d\!\setminus \!\clos D$ and by the upper semicontinuous regularization on $\partial D$ from $D$ \cite{HK}, \cite{AG}, \cite{R}, \cite{Helms}, \cite{Doob}, \cite{Landkoff} (see also \eqref{oB} and \eqref{gD} in Subsec.~\ref{ccPJ} below). 

Let $u\not\equiv -\infty$ be  a {\it subharmonic function on\/}  $\clos D$, i.e., on an open set containing $\clos D$,  with its {\it Riesz measure\/} $\varDelta_u$ on this open set (see in detail \S\S~\ref{Sssm}--\ref{Sssf} and \eqref{df:cm}).

\begin{PJf}[{\rm \cite[Theorem 5.27]{HK}, \cite[4.5]{R}}] 
\begin{equation}\label{clasPJ}
u(x)=\int_{\partial D} u\dd \omega_D(x,\cdot)-\int_{\clos D} g_D(\cdot,x)\dd \varDelta_u \quad\text{for each $x\in D$.} 
\end{equation}
\end{PJf}

 For $s\in \RR$, we set  
\begin{subequations}\label{kK}
\begin{align}
k_s(t)& := \begin{cases}
\ln t  &\text{ if $s=0$},\\
 -\sgn (s)  t^{-s} &\text{ if $s\in \RR\!\setminus\!0$,} 
\end{cases}
\qquad  t\in \RR^+\!\setminus\!0,
\tag{\ref{kK}k}\label{{kK}k}
\\
K_{d-2}(y,x)&:=\begin{cases}
k_{d-2}\bigl(|y-x|\bigr)  &\text{ if $y\neq x$},\\
 -\infty &\text{ if $y=x$ and $d\geq 2$},\\
0 &\text{ if $y=x$ and  $d=1$},\\
\end{cases}
\quad  (y,x) \in \RR^d\times \RR^d.
\tag{\ref{kK}K}\label{{kK}K}
\end{align}
\end{subequations}
The following functions 
\begin{equation}\label{pqKo}
p\colon y\underset{\text{\tiny $y\in \RR^d$}}{\longmapsto}  g_D(y,x)+K_{d-2}(y,x), 
\quad   q\colon y\underset{\text{\tiny $y\in \RR^d$}}{\longmapsto} K_{d-2}(y,x)  
\end{equation}
are subharmonic with Riesz probability measures 
$\varDelta_p=\omega_D(x, \cdot)$ and $\varDelta_q=\delta_x$,
where $\delta_x$ is the  Dirac measure at $x\in D$: $\delta_x\bigl(\{x\}\bigr)=1$.
The following symmetric equivalent form of the classical Poisson\,--\,Jensen formula \eqref{clasPJ}  immediately follows  from the suitable definitions of  harmonic measures and Green's functions and is briefly discussed in  Subsec.~\ref{ccPJ}. 
\begin{SymPJf} 
If we choose $p, q$ as in \eqref{pqKo} and put $S=\clos D$, then   \eqref{clasPJ} can be rewritten in the symmetric form
\begin{equation}\label{1_0}
\int_S u\dd \varDelta_q+ \int_S p\dd \varDelta_u 
=\int_S u\dd \varDelta_p +\int_S q\dd \varDelta_u. 
\end{equation} 
\end{SymPJf}
Equality  \eqref{1_0}   reflects the fact that the {\it Laplace operator\/} $\bigtriangleup$ is {\it self-adjoint\/} for some formal bilinear integral form
 $(u,\bigtriangleup w):=\int u\bigtriangleup\! w=\int {\bigtriangleup u}\, w=(\bigtriangleup u,w)$ , where $w:=q-p$.

The following result  is a special case of our Main Theorem  from  Subsec.~\ref{SsPJ}, but already significantly develops the classical Poisson\,--\,Jensen formula \eqref{pqKo}--\eqref{1_0}.  

\begin{theorem}\label{thpq} Let  $S\subset \RR^d$ be a non-empty compact set, and     $p\not\equiv -\infty$ and $q\not\equiv -\infty$ be a pair  of subharmonic functions on\/ $S$ with Riesz measures $\varDelta_p$ and $\varDelta_q$, respectively. If   $p$ and $q$ are harmonic outside $S$ and $p=q$ outside\/  $S$, then the symmetric  Poisson\,--\,Jensen formula  \eqref{1_0}  holds for each subharmonic function $u\not\equiv -\infty$ on $S$ with Riesz measure $\varDelta_u$.
\end{theorem}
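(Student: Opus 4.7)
The plan is to reformulate the symmetric identity \eqref{1_0} as the adjointness relation
\[
\int_S u\dd\mu \;=\; \int_S w\dd\varDelta_u, \qquad w:=q-p,\quad \mu:=\varDelta_q-\varDelta_p,
\]
and then to derive it from Green's second identity via mollification. Under the hypothesis, $p=q$ and both are harmonic on $\RR^d\!\setminus\! S$, so $w$ vanishes identically outside $S$, has compact support contained in $S$, and its distributional Laplacian equals the signed Radon measure $\mu$, which is also supported in $S$. Rearranging terms shows that \eqref{1_0} is algebraically equivalent to the displayed identity above.

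First I would smooth. Let $\rho_\varepsilon\ge 0$ be a standard radial $C^\infty$ mollifier supported in the Euclidean ball of radius $\varepsilon$ with $\int\rho_\varepsilon=1$, and set $\widetilde\rho_\varepsilon:=\rho_\varepsilon*\rho_\varepsilon$. For small $\varepsilon$, $u_\varepsilon:=u*\rho_\varepsilon$ is $C^\infty$ and subharmonic in a neighborhood of $S$, while $w_\varepsilon:=w*\rho_\varepsilon$ is $C^\infty$ on $\RR^d$ with compact support. Choose a ball $B$ with $\supp w_\varepsilon\subset B$ at distance $>\varepsilon$ from $\partial B$. Then $w_\varepsilon$ vanishes in a neighborhood of $\partial B$, so Green's second identity on $B$ kills the boundary term and yields $\int_B u_\varepsilon\bigtriangleup w_\varepsilon\dd y=\int_B w_\varepsilon\bigtriangleup u_\varepsilon\dd y$. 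Using $\bigtriangleup w_\varepsilon\dd y=\rho_\varepsilon*\mu$ and $\bigtriangleup u_\varepsilon\dd y=\rho_\varepsilon*\varDelta_u$, Fubini's theorem (applicable because $\rho_\varepsilon$ is radial) rewrites this as the mollified adjointness
\[
\int (u*\widetilde\rho_\varepsilon)\dd\mu=\int (w*\widetilde\rho_\varepsilon)\dd\varDelta_u.
\]

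Next I would pass to the limit $\varepsilon\downarrow 0$. Subharmonicity of $u,p,q$ yields the monotone pointwise decrease $u*\widetilde\rho_\varepsilon\downarrow u$, $p*\widetilde\rho_\varepsilon\downarrow p$, $q*\widetilde\rho_\varepsilon\downarrow q$; hence $w*\widetilde\rho_\varepsilon\to w$ pointwise off a polar set. Splitting the mollified identity into the four separate pieces corresponding to \eqref{1_0}, and noting that $u,p,q$ are uniformly bounded above on some compact neighborhood of $S$, I can subtract a common large constant from each integrand and apply the monotone convergence theorem to each piece in turn, obtaining \eqref{1_0}.

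The main technical point is to verify that the four integrals in \eqref{1_0} are jointly well-defined in $[-\infty,+\infty)$ and that the monotone convergence step works despite the possibility that $\varDelta_p,\varDelta_q$ charge polar sets on which $u=-\infty$, and similarly for $p,q$ against $\varDelta_u$. Upper semicontinuity together with compactness of $S$ handle this cleanly: the positive parts of all integrands are uniformly integrable against the finite Radon measures $\varDelta_u,\varDelta_p,\varDelta_q$, while the monotone decrease from above controls the negative parts via MCT. The boundary of $B$ contributes nothing thanks to the compact support of $w$, so no regularity hypothesis on $S$ (thinness, smoothness, etc.) is required.
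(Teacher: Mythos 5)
Your strategy --- recasting \eqref{1_0} as the adjointness relation $\int u\dd \mu =\int w\dd \varDelta_u$ with $w:=q-p$, $\mu:=\varDelta_q-\varDelta_p$, and deriving it from Green's second identity after mollification --- is genuinely different from the paper's route, which represents $p$ and $q$ as $\pt_{\varDelta_p}+h$ and $\pt_{\varDelta_q}+h$ with a \emph{common} harmonic $h$ (via the canonical decomposition of the $\delta$-subharmonic function $w$ and Liouville-type arguments) and then applies Fubini's theorem to the symmetric kernel $K_{d-2}$. The mollified identity, the Fubini step using radiality of $\rho_\varepsilon$, and the four-fold splitting with monotone convergence are all sound. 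The gap is at the very end, in localizing the $\varDelta_u$-integrals to $S$.

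The function $w*\widetilde\rho_\varepsilon$ is supported in the (closed) $2\varepsilon$-neighbourhood of $S$, not in $S$. So after splitting into four terms and letting $\varepsilon\downarrow 0$ with a fixed compact neighbourhood $K'$ of $S$ as the domain of the $\varDelta_u$-integrals, what your argument yields is
\begin{equation*}
\int_S u\dd \varDelta_q+\int_{K'} p\dd \varDelta_u=\int_S u\dd \varDelta_p+\int_{K'} q\dd \varDelta_u .
\end{equation*}
Passing from this to \eqref{1_0} requires cancelling the common term $\int_{K'\setminus S} p\dd \varDelta_u=\int_{K'\setminus S} q\dd \varDelta_u$ (equal because $p=q$ off $S$), and that cancellation is legitimate only when this common value is finite. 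It can be $-\infty$: for $d=2$ let $q$ contain a summand $\log|{\cdot}-x_0|$ with $x_0\in \partial S$ (harmonic outside $S$), and let $\varDelta_u$ have atoms $a_n\delta_{y_n}$ with $y_n\to x_0$, $y_n\notin S$, $\sum_n a_n<\infty$ but $\sum_n a_n\log|y_n-x_0|=-\infty$. Then both sides of the displayed identity equal $-\infty$ and nothing follows about \eqref{1_0}, whose integrals see only $\varDelta_u\bigm|_S$ and which remains a nontrivial assertion in this situation. Shrinking $K'$ together with $\varepsilon$ does not obviously repair this: one would need $\int(w*\widetilde\rho_\varepsilon)\dd \varDelta_u\to 0$ over the $2\varepsilon$-collar outside $S$, and since $w$ may be very negative inside $S$ near $\partial S$ while $\varDelta_u$ may concentrate just outside $\partial S$, this is not automatic. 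The paper's proof avoids the issue precisely because its Fubini computation $\int \pt_{\mu'}\dd \varDelta=\int \pt_{\varDelta}\dd \mu'$ is carried out directly between the measures themselves, with the kernel bounded above on a compact product, so no mass is ever smeared across $\partial S$; to close your argument you would need an analogous step that tests $\varDelta_u$ against $w$ itself rather than against $w*\widetilde\rho_\varepsilon$, or a preliminary reduction to the case $\supp\varDelta_u\subset S$ (which itself is not free).
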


Our Main Lemma is formulated in Subsec.~\ref{SsPJm} and gives a symmetric Poisson\,--\,Jensen formula for measures and their potentials. The Main Lemma is proved in Sec.~\ref{PML}. The proof of the Main Lemma use Theorem \ref{lemPQ} 
on representations for pairs of subharmonic functions.   
Theorem \ref{lemPQ}  from Subsec.~\ref{RSF}  is also of independent interest.
 
The Main Theorem is formulated in Subsec.~\ref{SsPJ} and gives a full symmetric Poisson\,--\,Jensen formula for subharmonic integrands. The proof of 
the Main Theorem  in Sec.~\ref{PMT} essentially uses the Main Lemma.  
Theorem \ref{thpq} is deduced from the Main Theorem in Subsec.~\ref{SsPJ}.
The next Subsec.~\ref{ccPJ} contains a discussion of the classical symmetric  Poisson\,--\,Jensen formula  \eqref{pqKo}--\eqref{1_0}  as a consequence of Theorem \ref{thpq}. 

Our Duality Theorems \ref{DT1}--\ref{DT3} in Sec.~\ref{DT} give a complete description of potentials of measures obtained as a certain process of balayage of measures with compact support. In order to prove Duality Theorems \ref{DT1}--\ref{DT3}, we use both the  Main Lemma and the Main Theorem.

We proceed to precise and detailed definitions and formulations.

\subsection{\sf Basic notation, definitions, and conventions}\label{Ss11}

 The reader can skip this Subsec. \ref{Ss11}
and return to it only if necessary.

\subsubsection{\tt \,Sets, topology, order}
We denote by $\NN:=\{1,2,\dots\}$, $\RR$, and $\RR^+:=\{x\in \RR\colon x\geq 0\}$  the sets of {\it natural,\/} of {\it real,\/} and  of {\it positive\/} numbers, each endowed with its natural order ($\leq$, $\sup/\inf$), algebraic, geometric  and topological structure.  We denote singleton sets by a symbol without curly brackets. So, $\NN_0:=\{0\}\cup \NN=:0\cup \NN$, and  $\RR^+\!\setminus\!0:=\RR^+\!\setminus\!\{0\}$ is the set of {\it strictly positive\/} numbers, etc. The {\it extended real line\/}  $\overline \RR:=-\infty\sqcup\RR\sqcup+\infty$ is the order completion of $\RR$ by the  {\it disjoint union\/} $\sqcup$  with $+\infty:=\sup \RR$ and $-\infty:=\inf \RR$ equipped with the order topology with two  ends $\pm\infty$, $\overline \RR^+:=\RR^+\sqcup+\infty$;  $\inf \varnothing :=+\infty$, $\sup \varnothing :=-\infty$ for the {\it empty set\/} $\varnothing$ etc. 
The same symbol $0$ is also used, depending on the context, to denote  zero vector, zero function, zero measure, etc.

We denote by $\RR^d$ the  {\it Euclidean space of $d\in \NN$ dimensions\/}  with the  {\it Euclidean norm\/} $|x|:=\sqrt{x_1^2+\dots+x_d^2}$ of $x=(x_1,\dots ,x_d)\in \RR^d$,  and by $\RR^d_{\infty}:=\RR^d\sqcup\infty$ we denote  
 the {\it  Alexandroff\/} (Aleksandrov) 
{\it one-point compactification\/}   of $\RR^d$
obtained by adding one extra point $\infty$. For a subset $S\subset \RR^d_{\infty}$ or a subset $S\subset \RR^d$ we let $\complement S :=\RR^d_{\infty}\!\setminus\!S$, $\clos S$, $\Int S:=\complement (\clos \complement  S)$, and $\partial S:=\clos S\!\setminus\!\Int S$ denote its
 {\it complement,\/} {\it closure,} {\it interior,} and {\it boundary\/}  always in $\RR^d_{\infty}$, and $S$ is equipped with the topology induced from $\RR^d_{\infty}$. If $S'$ is a relative compact subset in $S$, i.e., $\clos S'\subset S$,  then we write $S'\Subset S$.  We denote by 
 $B(x,t):=\{y\in \RR^d\colon |y-x|< t\}$, $\overline B(x,t):=\{y\in \RR^d\colon |y-x|\leq  t\}$, $\partial \overline B(x,t):=\overline B(x,t)\!\setminus\!  B(x,t)$  an {\it open ball, closed ball,\/} a {\it circle of radius $t\in \RR^+$ centered at $x\in \RR^d$}, respectively. 

Let $T$ be a topological space, and $S$ be a subset in $T$.   
We denote by $\Conn_T S$ or $\Conn_T (S)$ the  set of all connected components of $S\subset T$ in $T$. 

\underline{Throughout this paper} $O\neq \varnothing$ will denote  an  {\it open subset  in\/ $\RR^d$},   and $D\neq \varnothing$ is a  {\it domain in\/ $\RR^d$,\/}  i.e., an open connected subset in $\RR^d$. 

\subsubsection{\tt \,Measures and charges}\label{Sssm}

The convex cone over $\RR^+$ of all Borel, or Radon,  positive measures $\mu\geq 0$  on the $\sigma$-algebra $\Borel (S)$ of all {\it Borel subsets\/} of $S$ is denoted by $\Meas^+(S)$; $\Meas^+_{\comp}(S)\subset \Meas^+(S)$ is the subcone of $\mu\in \Meas^+(S)$ with compact  {\it support\/} $\supp \mu$ in $S$, $\Meas(S):=\Meas^+(S)-\Meas^+(S)$ is the vector lattice over $\RR$ of {\it charges,\/} or signed measures, on $S$, $\Meas^{+1}(S)
$ is the convex set of {\it probability\/} measures on $S$, 
$\Meas_{\comp}^{1+}(S):=\Meas^{1+}(S)\cap \Meas_{\comp}(S)$,
and   $\Meas_{\comp}(S):=\Meas^+_{\comp}(S)-\Meas^+_{\comp}(S)$.
 For a charge $\mu \in \Meas(S)$, we let
$\mu^+:=\sup\{0,\mu\}$, $\mu^-:=(-\mu)^+$ and $\mu:= \mu^++\mu^-$ respectively denote its {\it upper,} {\it lower,} and {\it total variations,} and $\mu(x,t):=\mu\bigl( \overline B(x,t)\bigr)$. 

For an {\it extended numerical  function\/} $f\colon S\to \overline \RR$ we allow values $\pm\infty$ for Lebesgue integrals \cite[Ch.~3, Definiftion 3.3.2]{HK} (see also \cite{Bourbaki})
\begin{equation}\label{int}
\int_Sf\dd \mu\in \overline \RR, \quad \mu \in \Meas^+(S),
\end{equation}
and we say that $f$ is {\it $\mu$-integrable on\/} $S$ if  the integral in \eqref{int} is finite.

\subsubsection{\tt \,Subharmonic functions}\label{Sssf}

We denote  by $\sbh (O)$  the convex cone over $\RR^+$  of all   {\it subharmonic\/} (locally convex if $d = 1$) functions on $O$, including functions that are identically equal to $-\infty$ on some components $C\in \Conn_{\RR^d_{\infty}}(O)$. Thus, $\har(O):=\sbh(O)\cap \bigl(-\sbh(O)\bigr)$
is the vector space over $\RR$ of all {\it harmonic\/} (locally affine if $d = 1$) functions on $O$.
Each function \begin{equation*}
u\in \sbh_*(  O):=\bigl\{u\in \sbh( O)\colon u\not\equiv-\infty 
\text{ on each }C\in \Conn_{\RR^d_{\infty}}(O)\bigr\}
\end{equation*}
is associated with its {\it Riesz measure\/}
\begin{equation}\label{df:cm}
\varDelta_u:= c_d {\bigtriangleup}  u\in \Meas^+( O), 
\quad c_d:=\frac{\Gamma(d/2)}{2\pi^{d/2}\max \{1, d-2\bigr\}}, 
\end{equation}
where ${\bigtriangleup}$  is  the {\it Laplace operator\/}  acting in the sense of the  theory of distribution or generalized functions, and 
$\Gamma$ is the gamma function. If $u\equiv -\infty$ on $C\in \Conn_{\RR^d_{\infty}}(O)$, then  we set $\varDelta_{-\infty}(S):=+\infty$ for each  $S\subset C$. Given $S\subset \RR^d$, we set 
\begin{equation*}
\begin{split}
\text{Sbh}(S)&:=\bigcup \Bigl\{\sbh(O')\colon 
S\subset O'\overset{\text{\tiny open}}{=}\Int O'\subset \RR^d\Bigr\},
\\
\text{Sbh}_*(S)&:=\bigcup \Bigl\{\sbh_*(O')\colon 
S\subset O'\overset{\text{\tiny open}}{=}\Int O'\subset \RR^d\Bigr\},
\\
\text{Har}_*(S)&:=\bigcup \Bigl\{\har(O')\colon 
S\subset O'\overset{\text{\tiny open}}{=}\Int O'\subset \RR^d\Bigr\}.
\end{split}
\end{equation*}
Consider a  binary relation $\cong\,\subset  \text{Sbh}(S)\times \text{Sbh}(S)$
on $\text{Sbh}(S)$ defined by the rule: $U\cong V$ if {\it there is an open set $O'\supset S$ in\/ $\RR^d$ such that $U\in \sbh (O')$, $V\in \sbh(O')$, and $U(x)=V(x)$ for each $x\in O'$.}  This relation  $\cong$ is an {\it equivalence relation\/} on  $\text{Sbh}(S)$, on $\text{Sbh}_*(S)$, and on $\text{Har}(S)$.
The quotient sets of $\text{Sbh}(S)$, of $\text{Sbh}_*(S)$, and of $\text{Har}(S)$ 
by $\cong$  are denoted below by 
$\sbh(S):=\text{Sbh}(S)/\cong$,  $\sbh_*(S):=\text{Sbh}_*(S)/\cong$,
and $\har(S):=\text{Har}(S)/\cong$,  respectively.  The equivalence class $[u]$ of $u$ is denoted without square brackets as simply $u$, and we do not distinguish between the equivalence class $[u]$ and the function $u$ when possible. 
So, for $u,v\in \sbh(S)$, we write {\it``$u=v$ on $S$''} if $[u]=[v]$ in $\sbh(S):=\text{Sbh}(S)/\cong$, or, equivalently, $u\cong v$ on $\text{Sbh}(S)$,
and  we write $u\not\equiv-\infty$ if $u\in \sbh_*(S)$.
The concept of the Riesz measure $\varDelta_u$ of $u\in \sbh(S)$ is correctly and uniquely defined by the restriction $\varDelta_u\bigm|_S$ of the Riesz measure $\varDelta_u$ to $S$.  For $u\in \sbh(S)$ and $v\in \sbh(S)$, the concepts {\it ``$u\leq v$ on $S$'',\/ {\rm and}  ``$u=v$ outside $S$'', ``$u\leq v$ outside $S$'', ``$u$ is harmonic outside S'',\/} etc. defined naturally:
$u(x)\leq v(x)$ {\it for each} $x\in S$, and  {\it there exits an open set 
$O'\supset S$} {\rm such that } $u(x)=v(x)$ {\it for each $x\in O'\!\setminus\!S$}, $u(x)\leq v(x)$ {\it for each $x\in O'\!\setminus\!S$}, {\it the restriction $u\bigm|_{O'\!\setminus\!S}$ is harmonic on 
$O'\!\setminus\!S$,} respectively. 
So, Theorem \ref{thpq} from Introduction is formulated precisely in this interpretation.

\subsubsection{\tt \,Balayage}\label{Bal}

Let $S\in \Borel( \RR^d)$ and $H$\/ be a set of {\it upper semicontinuous\/} functions  
$f\colon S\to \overline \RR\!\setminus\!+\infty$. 
A  measure $\upomega\in \Meas^+_{\comp}(S)$ is called the {\it balayage\/} of a measure $\varDelta \in \Meas^+_{\comp}(S)$ for $S$ with respect to $H$  \cite{Meyer}, \cite{BH}, \cite[Definition 5.2]{KhaRozKha19}, or, briefly, {\it $\upomega$ is a\/ $H$-balayage of\/ $\varDelta$,} and we write $\varDelta \preceq_{H}\upomega$ or $\upomega\succeq_H\varDelta$ if
\begin{equation}\label{bhar0}
\int_S h\dd \varDelta \leq \int_S h\dd \upomega \quad\text{\it for each $h\in H$ in accordance with \eqref{int}}. 
\end{equation}
If $\varDelta \preceq_{H}\upomega$ and $\upomega \preceq_{H}\varDelta$, then we write
$\varDelta\simeq_{H}\upomega$. 
The following properties are obvious:
\begin{enumerate}[{1.}]
\item The binary relation $\preceq_{H}$ (respectively $\simeq_{H}$) on $\Meas^+_{\comp}(S)$ 
 is  a  {\it preorder,\/} i.e., a reflexive and transitive relation, 
(respectively, an {\it equivalence\/}) on  $\Meas^+_{\comp}(S)$.
\item If $H$ contains a strictly positive (respectively, negative) constant, then $\varDelta (S)\leq \upomega(S)$ ($\varDelta (S)\geq \upomega(S)$, respectively). 
\item If $H=-H$, then the order  $\preceq_{H}$ is the equivalence $\simeq_{H}$. 
So, if $H=\har(S)$, then   $\upomega $ is a $\har(S)$-balayage of 
$\varDelta $ if and only if $\varDelta\simeq_{\har(S)}\upomega$, i.e.,
\begin{equation}\label{bhar}
\int_S h\dd \varDelta =\int_S h\dd \upomega \quad\text{\it for each $h\in \har(S)$} \quad\text{\it and} \quad \varDelta(S)=\upomega(S). 
\end{equation}

\item\label{4} If $\varDelta \preceq_{\sbh(S)}\upomega$, then $\varDelta \preceq_{\har(S)}\upomega$. The converse is not true
\cite[XIB2]{Koosis}, \cite[Example]{MenKha19}. 
\item\label{pr:diff} If $\upomega \in \Meas^+_{\comp}(O)$   is a 
$\bigl(\sbh(O)\cap C^{\infty}(O)\bigr)$-balayage of $\varDelta\in \Meas_{\comp}^+(O)$, where $C^{\infty}(O)$ is the class 
of all infinitely differentiable functions on $O$, then  $\varDelta\preceq_{\sbh(O)}\upomega$, since for each function $u\in \sbh(O)$ there exists a sequence of functions $u_j\underset{\text{\tiny $j\in \NN$}}{\in} \sbh(O)\cap C^{\infty}(O)$ decreasing to it  \cite[Ch. 4, 10, Approximation Theorem]{Doob}.
\end{enumerate}

\begin{remark} Balayage of  charges and  measures   with a non-compact support is also occur frequently and are used  in Analysis. So, a bounded domain $D\subset \RR^d$ is called a {\it quadrature domain\/} (for harmonic functions)  if there is a charge $\varDelta \in \Meas_{\comp} (D)$ such that 
the restriction of the Lebesgue measure $\lambda$ to $D$  is a balayage of 
$\varDelta$ with respect to the class of all harmonic  $\lambda$-integrable functions on $D$. In connection with the quadrature domains, see very informative overview  \cite[3]{quad} and bibliography in it.
\end{remark}

\subsubsection{\tt \,Potentials}\label{Pjhar}
For a {\it charge\/} $\mu\in \Meas_{\comp}(O)$ its  {\it potential\/}  
\begin{equation}\label{pot} 
\pt_{\mu}\colon \RR^d_{\infty}\to \overline \RR, \quad \pt_{\mu}(y)
\overset{\eqref{kK}}{:=}\int_O K_{d-2}(x,y) \dd \mu (x), 
\end{equation}
is uniquely determined on \cite{Arsove}, \cite[3.1]{KhaRoz18}
\begin{equation}\label{Dom}
\Dom \pt_{\mu} 
:=\left\{y\in \RR^d\colon
\inf\left\{ \int_{0}^1\frac{\mu^-(y,t)}{t^{d-1}} \dd t, \int_{0}^1\frac{\mu^+(y,t)}{t^{d-1}} \dd t\right\}<+ \infty 
\right\}
\end{equation}
by values in  $\overline \RR$, 
and  the set $E:=(\complement \Dom \pt_{\mu})\!\setminus\!\infty$ is {\it polar\/} with zero {\it outer capacity\/} 
\begin{equation*}
\text{Cap}^*(E):=\inf_{E\subset O'\overset{\text{\tiny open}}{=}\Int O'}  
\sup_{\stackrel{C\overset{\text{\tiny closed}}{=}\clos C\overset{\text{\tiny compact}}{\Subset} O}{\nu\in \Meas^{1+}(C)}} 
 k_{d-2}^{-1}\left(\iint K_{d-2} (x,y)\dd \nu (x) \dd \nu(y) \right).
\end{equation*}
Evidently $\pt_{\mu}\in \har\bigl(\RR^d\!\setminus\!\supp |\mu|\bigr)$, and if $\mu \in \Meas^+_{\comp}(\RR^d)$, then $\pt_{\mu}\in \sbh_*(\RR^d)$. 

\subsubsection{\tt \,Inward filling of sets with respect to an open set}

Let $O$ be an open set in $\RR^d$. The union of $S\subset O$ with all components $C\in \Conn_O(O\!\setminus\!S)$ such that $C\Subset O$  will be called the {\it inward filling\/} of $S$ with respect to $O$, and we denote this union by $\infill_O S$ or $\infill_O (S)$, although in \cite[1.7]{Gardiner}, \cite{BagbyGauther}, \cite[Sec.~12]{Gauther_B}, \cite[\S~1]{Kha03} another  notation $\hat S$ was used. 
Denote by $O_{\infty}$  the {\it  Alexandroff one-point compactification\/} of  $O$ with underlying set $O \sqcup \infty$, where extra point $\infty \not\in O$  can be identified with the boundary $\partial O$ or the complement $\complement O$, considered as a single point.
The following  elementary properties of the  inward filling  will  often be used without mentioning them.

\begin{proposition}[{\rm \cite[6.3]{Gardiner}, \cite{Gauther_B}, \cite{BagbyGauther}, \cite{Gauther_C}}]\label{KOc}
Let  $S$ be a compact  set in an open set  $O \subset \RR^d$.  Then 
\begin{enumerate}[{\rm (i)}]
\item\label{Ki} $\infill_{O}  S$ is a compact subset in $O$, and
$\infill_{O} \bigl( \infill_{O}  S\bigr)=\infill_{O} S$;
\item\label{Kii} the set\/ $O_{\infty}\!\setminus\!\infill_{O}  S$ is connected and locally connected subset in\/ $O_{\infty}$; 
\item\label{Kiii}   the inward filling of $S$ with respect to $O$ coincides with the complement in $O_{\infty}$ of  connected component of $O_{\infty}\!\setminus\!S$ containing the point $\infty$, i.\,e., 
$\infill_{O}  S=O_{\infty}\!\setminus\! C_{\infty}$, 
where $\infty\in C_{\infty}\in \Conn_{O_{\infty}}(O_{\infty}\!\setminus\! S)$;
\item\label{Kiv} if $O'\subset \RR_{\infty}^d$ is an open subset and 
 $O\subset  O'$, then  $\infill_{O}  S\subset \infill_{O'}  S$;
\item\label{Kv} if $S'\subset S$ is a compact  subset in 
 $O$, then  $\infill_{O}  S'\subset \infill_O  S$;
\item\label{Kvi} $\RR^d\!\setminus\!\infill_{O} S$ has only finitely many components, i.\,e., $\#\Conn_{\RR^d_{\infty}}(\RR^d\!\setminus\!\infill_{O}  S)<\infty$. 
\end{enumerate}
\end{proposition}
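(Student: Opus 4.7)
I will establish (iii) first as the pivotal identity, derive (i), (ii), (iv), (v) from it by topological bookkeeping, and then tackle (vi), which will be the main obstacle.

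For (iii), my observation is that for any $C\in\Conn_O(O\!\setminus\!S)$, the Alexandroff point $\infty$ lies in $\clos_{O_\infty} C$ exactly when every basic neighborhood $O_\infty\!\setminus\! K$ of $\infty$ (with $K\Subset O$ compact) meets $C$, which is precisely the condition $C\not\Subset O$. Hence the components of the open set $O_\infty\!\setminus\! S$ split into two classes: each $C\Subset O$ remains a component of $O_\infty\!\setminus\! S$ (its closure in $O_\infty$ equals $\clos_{\RR^d}C\subset O$, disjoint from $\infty$), while $\{\infty\}\cup\bigcup\{C:C\not\Subset O\}$ is connected (every such $C$ accumulates at $\infty$) and thus constitutes the unique component $C_\infty$. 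Complementing in $O_\infty$ yields $\infill_O S=S\cup\bigcup\{C:C\Subset O\}=O_\infty\!\setminus\! C_\infty$.

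From (iii) I deduce (i), (ii), (iv), and (v). For the compactness in (i), a standard boundary argument gives $\partial C\subset O\cap(S\cup\complement O)=S$ for each $C\Subset O$; hence $C$ is clopen in $\RR^d\!\setminus\! S$ and coincides with a bounded component of $\RR^d\!\setminus\! S$, and all such bounded components lie in $\overline{B(0,R)}$ whenever $S\subset B(0,R)$. So $\infill_O S$ is bounded, and closedness in $\RR^d$ follows because a limit of points drawn from infinitely many distinct $C_i\Subset O$ must itself lie in $S$. Idempotence is then immediate from (iii). For (ii), (i) makes $C_\infty$ open in $O_\infty$; local connectedness at points of $O$ is inherited from $\RR^d$, and at $\infty$ it is witnessed by the basis $\{O_\infty\!\setminus\!\infill_O K\}_K$ indexed by compact exhaustions $K$ of $O$, each member connected by (iii) applied to $K$ in place of $S$. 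Items (iv) and (v) reduce to checking that if $C\Subset O\subset O'$ then $C$ is clopen in $O'\!\setminus\! S$ (hence a component $\Subset O'$), and that when $S'\subset S$ each $C'\Subset O$ component of $O\!\setminus\! S'$ decomposes pointwise into $S\cap C'$ together with components $C\Subset O$ of $O\!\setminus\! S$ contained in $C'$.

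The main obstacle is (vi). My key claim is that every bounded component $H$ of the open set $\RR^d\!\setminus\!\infill_O S$ must meet $\complement O$: otherwise $H\subset O$ with $\partial H\subset\infill_O S\subset O$, so $H$ would be a component of $O\!\setminus\!\infill_O S$ relatively compact in $O$, contradicting the idempotence from (i). Since $\infill_O S$ is bounded, say $\subset B(0,R)$, every such bounded $H$ is contained in $\overline{B(0,R)}$. Assume for contradiction there are infinitely many distinct bounded components $H_1,H_2,\ldots$ and pick $q_i\in H_i\cap\complement O$; the bounded sequence $(q_i)$ has a subsequential limit $q$, and since $\complement O$ is closed in $\RR^d$, $q\in\complement O\subset\RR^d\!\setminus\!\infill_O S$, so $q$ belongs to some open component $H_\ast$. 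For all sufficiently large $i$, $q_i\in H_\ast$, but $q_i\in H_i$ with components pairwise disjoint, forcing $H_i=H_\ast$ and contradicting the distinctness of the $H_i$. Together with the one unbounded component (two if $d=1$), this yields $\#\Conn_{\RR^d_\infty}(\RR^d\!\setminus\!\infill_O S)<\infty$.
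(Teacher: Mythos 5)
Your proposal is correct in all six parts. Note, however, that the paper itself gives no proof of Proposition \ref{KOc}: it is stated as a collection of known elementary facts with references to Gardiner's book and the papers of Bagby--Gauthier, so there is no argument in the text to compare yours against. Your write-up is a legitimate self-contained verification. The route through (\ref{Kiii}) first — identifying $\infty\in\clos_{O_\infty}C$ with $C\not\Subset O$, so that $C_\infty=\{\infty\}\cup\bigcup\{C\colon C\not\Subset O\}$ and the relatively compact components stay clopen in $O_\infty\!\setminus\!S$ — is the standard one and everything downstream checks out: the boundary inclusion $\partial C\subset S$ makes each $C\Subset O$ a bounded component of $\RR^d\!\setminus\!S$, which gives boundedness and (with the limit-point argument, which implicitly uses that a connected neighbourhood of a point of $\RR^d\!\setminus\!S$ lies in a single component) closedness of $\infill_O S$; and in (\ref{Kvi}) the key claim that every bounded component of $\RR^d\!\setminus\!\infill_O S$ meets $\complement O$ is exactly what the idempotence from (\ref{Ki}) forbids failing, after which the accumulation argument on $\complement O\cap\clos B(0,R)$ closes the case. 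The only places where you lean on the reader are the maximality of $C_\infty$ as a component (which does follow from the clopenness of the $C\Subset O$ that you establish) and, in (\ref{Kii}), the need to take the exhausting compacta $K\supset\infill_O S$ so that $O_\infty\!\setminus\!\infill_O K$ actually sits inside $C_\infty$; both are easily supplied. No gaps.
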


\section{Poisson\,--\,Jensen formulas}\label{SshPJ}
\setcounter{equation}{0}

\subsection{\sf Main result  for measures and their potentials}\label{SsPJm}

\begin{mainlemma}\label{th1}  
Let $\varDelta \in \Meas_{\comp}^+(O)$, $\upomega\in \Meas_{\comp}^+(O)$, and 
\begin{equation}\label{SO}
S_O:=\infill_O (\supp \varDelta \cup \supp \upomega).
\end{equation}
The following seven  statements are equivalent:
\begin{enumerate}[{\rm I.}]
\item\label{Im} $\varDelta\preceq_{\har(O)}\upomega$. 
\item\label{IIm} $\varDelta\simeq_{\har(S_O)}\upomega$. 
\item\label{IIIm}  $\pt_{\varDelta}=\pt_{\upomega}$ on\/ $\RR^d\!\setminus\!S_O$. 
\item\label{IVm} There are a compact subset $S$ in $O$, a function $q\in \sbh_*(S)$ with Riesz measure $\varDelta_q=\varDelta$, and a function $p\in \sbh_*(S)$ with Riesz measure $\varDelta_p
=\upomega$ such that $q$ and $p$ are harmonic outside $S$, and 
$q=p$ outside $S$.  
\item\label{Vm} The 
symmetric Poisson\,--\,Jensen formula for measures and their potentials is valid:
\begin{subequations}\label{fPJ}
\begin{align}
\int u\dd \varDelta+\int_B {\pt}_{\upomega} \dd \varDelta_u&=
 \int u\dd \upomega +\int_B {\pt}_{\varDelta} \dd \varDelta_u
\tag{\ref{fPJ}f}\label{{fPJ}f}\\
\text{for each $B\in \Borel(\RR^d)$ such that \/}& S_O\subset B\Subset O
\text{ and  for each $u\in \sbh_*(\clos B)$}.
\tag{\ref{fPJ}B}\label{{fPJ}B}
\end{align}
\end{subequations}
\item\label{VIm} For each  $q\in \sbh_*(S_O)$ with  $\varDelta_q=\varDelta$ there is  $p\in \sbh_*(S_O)$ with  $\varDelta_p=\upomega$ such that 
\begin{equation}\label{2R}
\int u\dd \varDelta+\int_{S_O} p\dd \varDelta_u =\int u\dd \upomega+\int_{S_O}
q\dd \varDelta_u\quad\text{for each $u\in \sbh_*(O)$}. 
\end{equation}
\item\label{VIIm} There are a compact subset $S\supset S_O$
in $O$ and a pair of functions  $q\in \sbh_*(S_O)$ and $p\in \sbh_*(S_O)$ with  Riesz measures $\varDelta_q=\varDelta$ and   $\varDelta_p=\upomega$, respectively,  such that\/ the equality in \eqref{2R} is fulfilled  for each special  subharmonic function $u_x\colon y\underset{\text{\tiny $y\in \RR^d$}}{\longmapsto} K_{d-2}(y,x)$ with $x\in O\!\setminus\!S$
 instead of all functions   $u\in \sbh_*(O)$ in\/ \eqref{2R}.
\end{enumerate} 
 
\end{mainlemma}
The proof of the Main Lemma will be given only after some preparation in Section  \ref{PML}. 

\subsection{\sf Full version of the Poisson\,--\,Jensen formula for subharmonic integrands}\label{SsPJ}

The starting point of the  Main Lemma is a pair of 
measures $\varDelta, \upomega \in \Meas_{\comp}^+(O)$. 
Our Main Theorem is a functional  counterpart of the Main Lemma. The starting point in it is now a pair of subharmonic functions  from \eqref{{Spq}s}.
\begin{maintheo}\label{MT}
Let 
\begin{subequations}\label{Spq}
\begin{align}
\varnothing \neq S\overset{\text{{\rm \tiny closed}}}{=}\clos S
\overset{\text{{\rm \tiny compact}}}{\Subset} O\overset{\text{{\rm \tiny open}}}{=}&\Int O\subset \RR^d, \quad  S_O:=\infill_{O}S,
\tag{\ref{Spq}S}\label{{Spq}S}\\
q\in \sbh_*(O)\cap \har (O\!\setminus\!S),& \quad 
p\in \sbh_*(O)\cap \har (O\!\setminus\!S),
\tag{\ref{Spq}s}\label{{Spq}s}\\
S_{\neq}:=\bigl\{x\in O\colon& q(x)\neq p(x)\bigr\}.
\tag{\ref{Spq}$\neq$}\label{{Spq}neq}
\end{align}
\end{subequations}
The following four  statements are equivalent:
\begin{enumerate}[{\rm [I]}]
\item\label{Is} $ S_{\neq}\subset S_O$, i.e.,
$q=p$ on $O\!\setminus\!S_O$.
\item\label{IIs} There is a function $h\in \har(O)$ such that 
\begin{equation}\label{pqh}
\begin{cases}
q=\pt_{\varDelta_q}+h\\
p=\pt_{\varDelta_p}+h
\end{cases}
\text{on $O$ and\/  }  \pt_{\varDelta_q}=\pt_{\varDelta_p} \text{ on\/ $\RR^d\!\setminus\!S_O$},
\end{equation} 
where $\varDelta_q\in \Meas^+(S)$ and $\varDelta_p\in \Meas^+(S)$ are the Riesz measures of $q$ and $p$.
\item\label{IIIs} The full symmetric Poisson\,--\,Jensen formula 
is valid: 
\begin{subequations}\label{2}
\begin{align}
\int_S u\dd \varDelta_q+\int_{B} p\dd \varDelta_u &=\int_S u\dd \varDelta_p+\int_{B} q\dd \varDelta_u
\quad \text{for each $B\in \Borel(\RR^d)$}
\tag{\ref{2}f}\label{{2}f}\\
\text{under } S_O\cap S_{\neq}&\subset B\Subset O
\text{ and  for each $u\in \sbh_*(S_O\cup \clos B )$}.
\tag{\ref{2}B}\label{{2}B}
\end{align}
\end{subequations}
\item\label{IVs}  \eqref{2} holds   
for a sequence  of sets $B_j\underset{\text{\tiny $j\in \NN_0$}}{\in} \Borel (O)$ such that $B_0:=S_O\underset{\text{\tiny $j\in \NN$}}{\subset} B_j\underset{\text{\tiny $j\in \NN$}}{\Subset} O$ and\/ $\bigcup_{j\in \NN}B_j=O$ instead of all $B\in \Borel(\RR^d)$ with $S_O\cap S_{\neq} \subset B\Subset O$ in\/ \eqref{2} and for each special  subharmonic function $u_x\colon y\underset{\text{\tiny $y\in \RR^d$}}{\longmapsto} K_{d-2}(y,x)$ with $x\in O\!\setminus\!B_0=O\!\setminus\!S_O$
 instead of all  functions $u\in \sbh_*(S_O\cup \clos B)$ in\/ \eqref{{2}B}.
\end{enumerate} 
\end{maintheo}

We can now prove Theorem \ref{thpq} of the Introduction.
\begin{proof}[of Theorem\/ {\rm \ref{thpq}}] 
There is an open set $O\subset \RR^d$ such that 
$u\in \sbh_*(O)$, $q\in \sbh_*(O)$ and $p\in \sbh_*(O)$ are harmonic on  $O\!\setminus\! S$, and  also $q=p$ on  $O\!\setminus\!S$. 
Evidently, in the notation \eqref{Spq}, we have $S_O\cap S_{\neq}\subset S
\subset S_O\Subset O$ and $u\in \sbh_*(S_O)$. Theorem \ref{thpq} with \eqref{1_0} follows from  implication [\ref{Is}]$\Rightarrow$[\ref{IIIs}] of the Main Theorem, since we can choose  
 $B:=S$ in \eqref{2}. 
\end{proof}

\subsection{\sf In detail on the classical Poisson\,--\,Jensen formula}\label{ccPJ}

If  $x\in D\Subset O$, then  
the {\it extended harmonic measure  $\omega_D(x, \cdot )\in \Meas^{1+}(\partial D)\subset \Meas^{1+}_{\comp}(\RR^d)$ (for $D$ at\/} $x$)  defined  on sets $B\in \Borel(\RR^d)$ by
\begin{equation}\label{oB}
\omega_D(x, B):=\sup\left\{u(x)\colon u\in \sbh(D),\; \limsup_{D\ni y'\to y\in \partial D } u(y')\leq 
\begin{cases}
1\text{ for $y\in B\cap \partial D$}\\
0\text{ for $y\notin B\cap \partial D$}
\end{cases} 
\right\}
\end{equation}
is a $\har(O)$-balayage of $\delta_x$ with obvious equalities 
\begin{equation*}
\infill\bigl(\supp \delta_x\cup \supp \omega_D(x, \cdot )\bigr) =\infill (x\cup \partial D) =\clos D,
\end{equation*}
the potential (see \cite[Ch.~4,\S~1,2]{Landkoff}) 
\begin{multline}\label{Dog}
\pt_{\omega_D(x, \cdot )-\delta_x}(y) =
\pt_{\omega_D(x, \cdot)}(y)-\pt_{\delta_x}(y) 
\\=\int_{\partial D} K_{d-2}(y,x') \dd_{x'} \omega_D(x, x')-K_{d-2}(y,x )
=g_D(y,x), \quad y\in \RR^d_{\infty}, \quad x\in D,  
\end{multline}
is equal to  the {\it generalized Green's function $g_D(\cdot,x)\colon \RR^d_{\infty}\to \overline \RR^+$ (for $D$ with pole at $x$ and  $g_D(x,x):=+\infty$)\/}
defined on $\RR^d_{\infty}\!\setminus\!x$ by upper semicontinuous regularization
 \begin{equation}\label{gD}
\begin{split}
g_D(y ,x)&:=\check{g}^*(y,x)
:=\limsup_{\RR^d\ni y'\to y} \check{g}(y',x)\in \overline{\RR}^+ \quad 
\text{for each $y\in \RR^d_{\infty}\!\setminus\!x$, where} \\
\check{g}(y,x)&:=\sup\left\{u(y)\colon
u\in \sbh(\RR^d\!\setminus\!x), \;
\begin{cases}
u(y')\leq 0 \text{ for each  }y\notin \clos D, \\
\limsup\limits_{x\neq y\to x}\dfrac{u(y)}{-K_{d-2}(x,y)}\leq 1
\end{cases} 
\right\}. 
\end{split}
\end{equation} 

The equalities \eqref{Dog} give \eqref{pqKo}
with $\varDelta_p=\omega_D(x, \cdot)$ and $\varDelta_q=\delta_x$. Thus, Theorem \ref{thpq} implies the symmetric Poisson\,--\,Jensen  formula  \eqref{1_0} which can be written in detail as
\begin{multline}\label{10a}
\int_{\clos D} u\dd \delta_x+ \int_{\clos D} 
\Bigl(g_D(\cdot,x)+K_{d-2}(\cdot,x)\Bigr)
\dd \varDelta_u 
\\=\int_{\clos D} u\dd \omega_D(x,\cdot)
 +\int_{\clos D} K_{d-2}(\cdot,x)\dd \varDelta_u. 
\end{multline}  
The latter coincides with the classical Poisson\,--\,Jensen  formula \eqref{clasPJ}.

\subsection{\sf The Poisson\,--\,Jensen formula for the Arens\,--\,Singer, and Jensen measures and potentials}\label{PJAS}

Our results presented in this Subsec.~\ref{PJAS}  are intermediate between the classical Poisson\,--\,Jensen formula \eqref{clasPJ} and the symmetric Poisson\,--\,Jensen formula \eqref{1_0} of Theorem \ref{thpq}. 

If $x\in O$ and   $\delta_x\preceq_{\har(O)}\upomega\in \Meas^+_{\comp}(O)$, then $\upomega$ is called an {\it Arens\,--\,Singer measure on $O$ at $x$\/}  \cite[Ch.~3]{Gamelin}, \cite{Sarason}, \cite{Kha96}, \cite[Definition 1]{Kha03}, \cite{Kha01II}, \cite{Kha07}, \cite{KudKha09}, or representing measure. We denote by $AS_x(O)\subset \Meas^{1+}_{\comp}(O)$ the class of all Arens\,--\,Singer measure on $O$ at $x$. 
If  $\upomega \in AS_x(O)$,  then the potential 
\cite[3.1]{R}, \cite[Definition 2]{Kha03}, \cite[3.1, 3.2]{KhaRoz18}, \cite{Chi18}
\begin{equation*}
{\pt}_{\upomega-\delta_x}(y))\overset{\eqref{pot}}{=}
{\pt}_{\upomega}-\pt_{\delta_x}(y)\overset{\eqref{kK}}{=}{\pt}_{\upomega}(y)-K_{d-2}(y, x), \quad y\in \RR^d_{\infty}\!\setminus\! x, 
\end{equation*}
satisfies conditions \cite[\S~1]{Kha03} (see also Duality Theorem \ref{DT3} in Sec.~\ref{DT} below)
\begin{equation}\label{ASpc}
\begin{split}
{\pt}_{\upomega-\delta_x}&\in \sbh(\RR^d_{\infty}\!\setminus\!x),\quad  {\pt}_{\upomega-\delta_x}(\infty):=0,
\\ 
 {\pt}_{\upomega-\delta_x}&\equiv 0 \quad\text{on $\RR^d_{\infty}\!\setminus\!\infill_O (x \cup \supp \upomega )$},\\
{\pt}_{\upomega-\delta_x}(y)&\leq -K_{d-2}(x,y)+O(1)\quad\text{as $x\neq y\to x$}. 
\end{split}
\end{equation}
If $x\in O$ and   $\delta_{x} \preceq_{\sbh(O)} \upomega$, then this measure $\upomega$ is called a {\it Jensen measure on $O$ at $x$}
\cite[3]{Gamelin}, \cite{Koosis}, \cite{Koosis96}, \cite{C-R}, \cite{C-RJ}, \cite{Ransford01}, \cite{Schachermayer}, \cite{HN11}, \cite{HN},  \cite{Kha91}, \cite{KhaTalKha15}, \cite{BaiTalKha16}, \cite{KhaKha19}. The class of these measures is denoted by $J_x(O)$, and properties \eqref{ASpc} are supplemented by the {\it positivity property\/} $\pt_{\upomega-\delta_x}\geq 0$ on $\RR_{\infty}^d\!\setminus\!x$ for all measures $\upomega \in J_x(O)\subset AS_x(O)$. These measures can be considered as generalizations of the extended harmonic measures \eqref{oB}.

By the implication \ref{Im}$\Rightarrow$\ref{Vm} of the Main Lemma  with $\varDelta:=\delta_x$, we obtain the following our  version \cite[Proposition 1.2, (1.3)]{Kha03} of the Poisson-Jensen formula for Arens\,--\,Singer measures $\upomega\in AS_x(O)$, generalizing the classical Poisson-Jensen formula \eqref{clasPJ}.
\begin{PJfASJ}
If  $\upomega \in AS_x(O)$, then 
\begin{equation}\label{PJASJ}
u(x)=\int u\dd \upomega-\int \pt_{\upomega-\delta_x}\dd \varDelta_u
\quad\text{\it for each  $u\in \sbh_*(O)$ with $u(x)\neq -\infty$.} 
\end{equation}
If $\upomega \in J_x(O)$ and $\upomega\neq \delta_x$, then the restriction  $u(x)\neq -\infty$ in \eqref{PJASJ} can be removed.
\end{PJfASJ}
For $x\in \RR^d$, a function $V\in \sbh_*\bigl(\RR^d_{\infty}\!\setminus\! x\bigr)$  
is called an {\it Arens\,--\,Singer potential  on  $O$ with pole at\/} $x\in O$  \cite[3]{Gamelin}, \cite{Sarason}, \cite{Kha01II}, \cite{Kha03}, \cite[Definition 6]{Kha07},  \cite[\S~4]{KudKha09} if 
there is $S_V\Subset O$ such that
\begin{equation}\label{ASpc+}
V\equiv 0 \quad\text{on $\complement S_V$}\quad\text{\it and}\quad 
\limsup_{x\neq y\to x}\frac{V(y)}{-K_{d-2}(x,y)}\leq 1.
\end{equation}
The class of all Arens\,--\,Singer potentials  on $O$ with pole at $x\in O$ denote by $ASP_x(O)$. 
A {\it positive\/} Arens\,--\,Singer potential is called  a {\it Jensen potential  on  $O$ with pole at\/} $x\in O$  \cite[3]{Gamelin},  \cite{Anderson}, \cite{Kha03}, 
\cite{MOS},  \cite[Definition 8]{Kha07}, \cite[IIIC]{Koosis96}, \cite{Kha12}, \cite{KhaTalKha15}, \cite{BaiTalKha16}. We denote by  $JP_x(O)$
the class of all Jensen  potentials  on $O$ with pole at $x\in O$. These potentials can be considered as generalizations of the Green's functions \eqref{gD}.
For  $V\in ASP_x(O)$, we choose (cf. \eqref{pqKo})
\begin{equation}\label{pV}
p\colon y\longmapsto  V(y)+K_{d-2}(y,x), 
\quad   q\colon y\longmapsto K_{d-2}(y,x) 
\quad\text{for  $y\in \RR^d$}.  
\end{equation} 
Then these subharmonic functions on $\RR^d$ 
are harmonic and coincide outside $\clos S_V$ by \eqref{ASpc+}, and the implication [\ref{Is}]$\Rightarrow$[\ref{IIIs}] of the Main Theorem give the equality 
(cf. \eqref{10a})
\begin{equation*}
\int_{O} u\dd \delta_x+ \int_{S_O} 
\Bigl(V(y)+K_{d-2}(\cdot,x)\Bigr)
\dd \varDelta_u =\int_{O} u\dd \varDelta_V
 +\int_{S_O} K_{d-2}(\cdot,x)\dd \varDelta_u, 
\end{equation*}  
where $S_O=\infill \biggl(x\cup \Bigl(\bigcup \bigl\{ y\in O\colon V\notin \har(y)\bigr\}\Bigr)\biggl)$. Hence we obtain 
\begin{PJfASJV}
If\, $V \in ASP_x(O)$, then 
\begin{equation}\label{PJASJV}
u(x)=\int u\dd \varDelta_V-\int V\dd \varDelta_u
\quad\text{\it for each  $u\in \sbh_*(O)$ with $u(x)\neq -\infty$.} 
\end{equation}
If\/ $V \in JP_x(O)$ and\/ $V\not\equiv 0$ on\/ $\complement x$, then the restriction  $u(x)\neq -\infty$ in \eqref{PJASJV} can be removed.
\end{PJfASJV}

\section{Proof of the Main Lemma}\label{PML}
\setcounter{equation}{0}

\subsection{\sf Representations for pairs of subharmonic functions}\label{RSF}

\begin{proposition}\label{prO} If\/ $\mu \in \Meas_{\comp}(\RR^d)$, then
\begin{subequations}\label{pnu0}
\begin{align}
{\pt}_{\mu}&\in \sbh(\RR^d)\bigcap \har(\RR^d\!\setminus\!\supp \mu), 
\tag{\ref{pnu0}h}\label{{pnu0}h}
\\ 
{\pt}_{\mu}(x)&\overset{ \eqref{{kK}k}}{=}\mu (\RR^d)k_{d-2}\bigl(|x|\bigr)+O\bigl(1/|x|^{d-1}\bigr),  
\quad x\to \infty.
\tag{\ref{pnu0}$\infty$}\label{{pnu0}infty}
\end{align}
\end{subequations}
\end{proposition}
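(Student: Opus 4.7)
The plan is to prove the two assertions of \eqref{pnu0} by classical potential-theoretic arguments: for \eqref{{pnu0}h} a standard subharmonicity/differentiation-under-the-integral computation, and for \eqref{{pnu0}infty} a uniform one-term Taylor expansion of the radial kernel $k_{d-2}$ at $t=|y|$.

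For \eqref{{pnu0}h}, since the $\sbh(\RR^d)$ assertion concerns a positive integrand, I would first restrict to $\mu\in \Meas^+_{\comp}(\RR^d)$ (in the signed case one splits $\mu=\mu^+-\mu^-$ and recovers the harmonicity of $\pt_{\mu}$ outside $\supp\mu=\supp\mu^+\cup\supp\mu^-$ from the positive case applied to each of $\mu^{\pm}$, as already observed after \eqref{Dom}). For $\mu\ge 0$, the kernel $K_{d-2}(\cdot,x)$ is upper semicontinuous and subharmonic in its first variable for each fixed $x$, so Fatou's lemma gives upper semicontinuity of $\pt_{\mu}$ and Fubini applied to the submean inequality of $K_{d-2}(\cdot,x)$ on every ball gives $\pt_{\mu}\in \sbh(\RR^d)$. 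To establish harmonicity outside the support, fix $y_0\in \RR^d\!\setminus\! \supp\mu$ and choose $r>0$ with $\overline B(y_0,2r)\cap \supp\mu=\varnothing$; on $B(y_0,r)\times\supp\mu$ the kernel $K_{d-2}$ is $C^{\infty}$ with all derivatives bounded, so differentiation under the integral sign together with harmonicity of $K_{d-2}(\cdot,x)$ away from $x$ yields $\bigtriangleup \pt_{\mu}(y)=\int \bigtriangleup_y K_{d-2}(y,x)\dd \mu(x)=0$ on $B(y_0,r)$.

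For \eqref{{pnu0}infty}, set $R:=\sup\{|x|\colon x\in \supp|\mu|\}<+\infty$ and consider $|y|>2R$. Writing $|y-x|^2=|y|^2-2\langle y,x\rangle+|x|^2$ and expanding the square root uniformly in $x\in\supp|\mu|$, one gets $|y-x|=|y|+\vartheta(y,x)$ with $\vartheta(y,x)=-\langle y,x\rangle/|y|+O(1/|y|)$; in particular $|\vartheta|\le R$ and $|y-x|/|y|\in[1/2,3/2]$. A one-term Taylor expansion of $k_{d-2}$ around $|y|$ then gives
\begin{equation*}
k_{d-2}(|y-x|)=k_{d-2}(|y|)+k_{d-2}'(|y|)\,\vartheta(y,x)+O(|y|^{-d}),
\end{equation*}
uniformly in $x\in\supp|\mu|$, where $k_{d-2}'(t)=(d-2)t^{-(d-1)}$ for $d\ge 3$, $k_0'(t)=1/t$ for $d=2$, and $k_{-1}'(t)\equiv 1$ for $d=1$. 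In all three cases the correction $k_{d-2}'(|y|)\vartheta(y,x)$ has size $O(|y|^{-(d-1)})$ (for $d=1$ this reads $O(|y|^0)=O(1)$). Integrating against $\dd \mu(x)$ and using $|\mu|(\RR^d)<+\infty$ together with boundedness of $\supp|\mu|$ produces $\pt_{\mu}(y)=\mu(\RR^d)k_{d-2}(|y|)+O(|y|^{-(d-1)})$ as $y\to \infty$.

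The only slightly delicate point is bookkeeping the three ranges $d\ge 3$, $d=2$, $d=1$ in the kernel expansion, but no real obstacle arises: the ``dipole'' term $k_{d-2}'(|y|)\vartheta(y,x)$ already has size $O(|y|^{-(d-1)})$, which is precisely the remainder asserted in \eqref{{pnu0}infty}, and the second-order Taylor remainder is of strictly smaller order $O(|y|^{-d})$ (and vanishes identically when $d=1$ since $k_{-1}''\equiv 0$).
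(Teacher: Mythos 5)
Your argument is correct and follows essentially the same route as the paper: a direct estimate of $k_{d-2}\bigl(|y-x|\bigr)-k_{d-2}\bigl(|y|\bigr)$ for large $|y|$, uniform over $x\in\supp|\mu|$, using only $\bigl||y-x|-|y|\bigr|\leq R$. The only cosmetic differences are that you treat all dimensions at once via a first-order Taylor expansion of $k_{d-2}$ (and you also supply the routine verification of \eqref{{pnu0}h}, which the paper takes as known), whereas the paper argues $d=1$ directly, cites Ransford for $d=2$, and handles $d>2$ by an algebraic factorization of $a^{d-2}-b^{d-2}$.
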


\begin{proof} For $d=1$, we have
\begin{equation*}
\left|\pt_{\mu}(x)-\mu(\RR)|x|\right|\leq \int \bigl||x-y|-|x|\bigr|\dd |\mu|(y)\leq \int |y|\dd |\mu|(y)
=O(1), \quad |x|\to +\infty. 
\end{equation*} 
 
See \cite[Theorem 3.1.2]{R} for $d=2$. 

For $d>2$ and $|x|\geq 2\sup\bigl\{|y|\colon y\in \supp \mu\bigr\}$, we have 
\begin{multline*}
\left|{\pt}_{\mu}(x)-\mu (\RR^d)k_{d-2}\bigl(|x|\bigr)\right|
=\left|\int \left(\frac{1}{|x|^{d-2}}-\frac{1}{|x-y|^{d-2}}\right)
\dd \mu (y)\right|\\
\leq\int \left|\frac{1}{|x|^{d-2}}-\frac{1}{|x-y|^{d-2}}\right|
\dd |\mu|(y)
\leq \frac{2^{d-2}}{|x|^{2d-4}}\int \left||x-y|^{d-2}
-|x|^{d-2}\right| \dd |\mu|(y)
\\
\leq \frac{2^{d-2}}{|x|^{2d-4}}
\int |y||x|^{d-3} \sum_{k=0}^{d-3}\Bigl(\frac32\Bigr)^k 
\dd |\mu|(y)\leq 2\frac{3^{d-2}}{|x|^{d-1}}
\int |y|\dd |\mu|(y)=O\Bigl(\frac{1}{|x|^{d-1}}\Bigr).
\end{multline*}
\end{proof}

\begin{theorem}\label{lemPQ} Let $O\subset \RR^d$ be an open set, and let  $p\in \sbh_*(O)$ and $q\in \sbh_*(O)$ be pair of functions such that $p$ and $q$ are harmonic outside a compact subset in $O$. If there is a compact set $S\Subset O$ such that $p=q$ on $O\!\setminus\!S$, then, for Riesz measures $\varDelta_p\in \Meas_{\comp}^+(O)$ of $p$ and  $\varDelta_q\in \Meas_{\comp}^+(O)$ of $q$, we have 
\begin{equation}\label{ptd}
\varDelta_p(O)=\varDelta_q(O), 
\quad \text{$\pt_{\varDelta_p}=\pt_{\varDelta_q}$ on $\RR^d\!\setminus\!S$},
\end{equation}
and there is a harmonic function $H$ on $O$  such that 
\begin{equation}\label{PQpR}
\begin{cases}
p=\pt_{\varDelta_p}+H\\
q=\pt_{\varDelta_q}+H
\end{cases}
\quad \text{on $O$, \quad  $H\in \har(O)$}.
\end{equation}
\end{theorem}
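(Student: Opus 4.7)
The strategy is to write each of $p$ and $q$ on $O$ as its own Newtonian-type potential plus a harmonic remainder, glue the difference of the two remainders with the difference of the two potentials into a single entire harmonic function on $\RR^d$, and then pin that function down by a Liouville-type argument. Because $p$ and $q$ are harmonic off a compact subset of $O$, both Riesz measures $\varDelta_p,\varDelta_q$ have compact support in $O$, and by the normalization of $c_d$ in \eqref{df:cm} the potentials $\pt_{\varDelta_p},\pt_{\varDelta_q}\in\sbh_*(\RR^d)$ carry the same Riesz measures as $p,q$ on $O$. Hence $H_p:=p-\pt_{\varDelta_p}$ and $H_q:=q-\pt_{\varDelta_q}$ lie in $\har(O)$, and the task reduces to showing $H_p=H_q$ on $O$. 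To that end I define, on the open cover $\RR^d=O\cup(\RR^d\!\setminus\!S)$,
\[
\Phi:=H_p-H_q \text{ on } O, \qquad \Phi:=-\pt_{\varDelta_p-\varDelta_q} \text{ on } \RR^d\!\setminus\!S.
\]
On the overlap $O\!\setminus\!S$ the hypothesis $p=q$ gives exactly $H_p-H_q=-\pt_{\varDelta_p-\varDelta_q}$, so the two formulas agree; each piece is harmonic on its open domain (the second because $\varDelta_p-\varDelta_q$ is supported in $S$), so $\Phi$ is harmonic on all of $\RR^d$.

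Proposition \ref{prO} applied to the signed measure $\varDelta_p-\varDelta_q$ then yields
\[
\Phi(x)=-\bigl(\varDelta_p(\RR^d)-\varDelta_q(\RR^d)\bigr)\,k_{d-2}(|x|)+O\bigl(1/|x|^{d-1}\bigr),\qquad|x|\to\infty,
\]
which is sub-polynomial in each dimension (decaying for $d\geq 3$, logarithmic for $d=2$, linear for $d=1$). The Liouville theorem for entire harmonic functions of polynomial growth therefore forces $\Phi$ to be constant when $d\geq 2$ and affine when $d=1$. Comparing the allowed form of $\Phi$ with the explicit asymptotic — using both asymptotic directions $x\to\pm\infty$ for $d=1$, separating the $\log|x|$, constant and $O(1/|x|)$ scales for $d=2$, and extracting the leading $|x|^{-(d-2)}$ coefficient for $d\geq 3$ — simultaneously kills the mass-difference coefficient and every remaining free constant of $\Phi$. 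One concludes $\varDelta_p(\RR^d)=\varDelta_q(\RR^d)$, hence $\varDelta_p(O)=\varDelta_q(O)$, and $\Phi\equiv 0$.

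From $\Phi\equiv 0$ the remaining conclusions drop out at once: on $\RR^d\!\setminus\!S$ the identity $\Phi=-(\pt_{\varDelta_p}-\pt_{\varDelta_q})=0$ is the potential equality in \eqref{ptd}, and on $O$ the identity $H_p-H_q=0$ gives the common harmonic correction $H:=H_p=H_q\in\har(O)$ required in \eqref{PQpR}. The main obstacle is the Liouville step: in low dimensions $d=1,2$ the naive reflex to invoke a bounded-Liouville statement does not close the case, and one must instead combine the polynomial-growth Liouville theorem with a careful asymptotic match, using Proposition \ref{prO} in both asymptotic directions, to rule out every surviving coefficient of $\Phi$. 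The gluing trick in the previous paragraph is what makes this single, global Liouville argument available in place of a case-by-case comparison of harmonic functions on the topologically unrestricted open set $O$.
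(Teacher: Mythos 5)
Your argument is correct in outline and takes a genuinely shorter route than the paper's in the passage from the model case to a general open set $O$. The paper first proves the statement for $O=\RR^d$ by exactly your Liouville comparison, and then, for general $O$, extends the $\delta$-subharmonic function $w=p-q$ by zero to $\RR^d$, invokes Arsove's canonical representation $w=P-Q$ with $P,Q\in\sbh_*(\RR^d)\cap\har(\RR^d\!\setminus\!{\mathsf S})$ plus a common subharmonic summand $s$ with $p=P+s$, $q=Q+s$, applies the $\RR^d$-case to $P,Q$, and reassembles. Your gluing of $H_p-H_q\in\har(O)$ with $-\pt_{\varDelta_p-\varDelta_q}\in\har(\RR^d\!\setminus\!S)$ over the overlap $O\!\setminus\!S$ bypasses the $\delta$-subharmonic machinery entirely; the observation that makes it work --- $\varDelta_p-\varDelta_q$ is supported in $S$ because $p=q$ on the open set $O\!\setminus\!S$, so the \emph{difference} of potentials is harmonic off $S$ even though each potential separately is only harmonic off the possibly larger compact where $p,q$ fail to be harmonic --- is sound, and it even delivers the conclusion $\pt_{\varDelta_p}=\pt_{\varDelta_q}$ on $\RR^d\!\setminus\!S$ directly, whereas the paper must argue separately to pass from its enlarged compact ${\mathsf S}$ back to $S$. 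One point to make explicit: on the overlap the identity $H_p-H_q=-\pt_{\varDelta_p-\varDelta_q}$ is verified pointwise only where $\pt_{\varDelta_p}$ and $\pt_{\varDelta_q}$ are both finite, i.e.\ off a polar set; since both sides are harmonic on $O\!\setminus\!S$, it then extends to the whole overlap by continuity.

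The one step that fails as written is the endgame for $d=1$. Proposition \ref{prO} gives only $\pt_\mu(x)=\mu(\RR)|x|+O(1)$, so matching the affine function $\Phi(x)=ax+c$ against $-b'|x|+O(1)$ in both directions kills $a$ and $b'$ but says nothing about the additive constant $c$: a nonzero constant is perfectly compatible with an $O(1)$ error at both ends, so your claim that both asymptotic directions ``kill every remaining free constant'' does not hold in dimension one. To finish you need the exact value of the one-dimensional potential outside the convex hull of $S$, which the paper isolates as Lemma \ref{lem2}: for $x\geq\sup S$ one has $\pt_{\varDelta_p-\varDelta_q}(x)=(\varDelta_p-\varDelta_q)(\RR)\,x-\int y\dd(\varDelta_p-\varDelta_q)(y)$, with the reflected formula for $x\leq\inf S$; after $a=b'=0$ this pins $\Phi\equiv c$ to $\int y\dd(\varDelta_p-\varDelta_q)(y)$ on one side and to its negative on the other, forcing $c=0$. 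Since in your construction $\Phi$ equals $-\pt_{\varDelta_p-\varDelta_q}$ exactly, not merely asymptotically, on $\RR\!\setminus\!S$, the repair is immediate --- but it does require the exact formula rather than the $O(1)$ asymptotics of Proposition \ref{prO}.
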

\begin{proof} By Weyl's lemma on the Laplace equation, we have  
\begin{equation*}
\begin{cases}
\bigtriangleup(p-\pt_{\varDelta_p})\overset{\eqref{df:cm}}{=}\frac{1}{c_d}(\varDelta_p-\varDelta_p)=0\\
\bigtriangleup(q-\pt_{\varDelta_q})\overset{\eqref{df:cm}}{=}\frac{1}{c_d}(\varDelta_q-\varDelta_q)=0
\end{cases}\quad \Longrightarrow \quad 
\begin{cases}
h_p:=p-\pt_{\varDelta_p}\in \har(O)\\
h_q:=q-\pt_{\varDelta_q}\in \har(O)
\end{cases}
\end{equation*}
 and  obtain representations
\begin{equation}\label{hPQ}
\begin{cases}
p=\pt_{\varDelta_p}+h_p\\
q=\pt_{\varDelta_Q}+h_q
\end{cases}
\text{on $O$  with $h_p\in \har(O)$ and $h_q\in \har(O)$}.
\end{equation}

Let us first consider separately 

\paragraph{\bf The case $O:=\RR^d$ in the notation $P: = p$ and $Q: = q$.} 
Put 
\begin{equation}\label{hpq}
h\overset{\eqref{hPQ}}{:=}h_P-h_Q\in \har(\RR^d).
\end{equation}
By the conditions of Theorem \ref{lemPQ} and Proposition \ref{prO}, we have
\begin{multline}\label{bd}
h(x)\overset{\eqref{hpq}}{=}h_P(x)-h_Q(x)\overset{\eqref{hPQ}}{=}-\pt_{\varDelta_P}(x)+\pt_{\varDelta_Q}(x)+\bigl(P(x)-Q(x)\bigr)\\
\overset{\eqref{{pnu0}infty}}{=}
bk_{d-2}\bigl(|x|\bigr)
+O\bigl(|x|^{1-d}\bigr), \quad |x|\to +\infty, \quad\text{where  $b:=\varDelta_Q(\RR^d)-\varDelta_P(\RR^d)$.}
\end{multline}

\paragraph{The case $d>2$.} If $d\geq 3$, then, in view of \eqref{bd}, this harmonic function $h$ bounded 
on $\RR^d$. By Liouville's Theorem \cite[Ch.~3]{ABR}, $h$ is constant, and 
$h_P-h_Q=h\overset{\eqref{bd}}{\equiv} 0$ on $\RR^d$. 
In particular, $|b|=\bigl|b+|x|^{d-2}h(x)\bigr|\overset{\eqref{bd}}{=}
O\bigl(1/|x|\bigr) $ as $x\to \infty$, i.e., $b=0$. 
Thus, for   $H:=h_P=h_Q$,   
 by \eqref{hPQ}, we obtain representations \eqref{PQpR} together with  $\pt_{\varDelta_P}=\pt_{\varDelta_Q}$ on $\RR^d\!\setminus\!S$,  as required. 
  
\paragraph{The case $d=2$.} Using  \eqref{bd} we obtain
$\bigl|h(x)-b\log |x|\bigr|\overset{\eqref{bd}}{=}O\bigl(1/|x|\bigr)$ as $x\to \infty$. Hence, this harmonic function $h$ is bounded from below
 if $b\geq 0$ or  bounded from above if $b<0$. Therefore, by  Liouville's Theorem,  $h$ is constant, $b=0$, i.e., $\varDelta_P(\RR^2)\overset{\eqref{bd}}{=}\varDelta_Q(\RR^2)$, and $h
\overset{\eqref{bd}}{\equiv} 0$ on $\RR^2$. Thus, we obtain  \eqref{PQpR} together with \eqref{ptd}.  
 \paragraph{The case $d=1$.} Using  \eqref{bd} we obtain
$\bigl|h(x)-b|x|\bigr|\overset{\eqref{bd}}{=}O(1)$ as $x\to \infty$. Hence, this affine  function $h$
on $\RR$ is bounded from below  if $b\geq 0$ or bounded from above if $b<0$. 
Therefore, $h$ is constant, $b=0$, i.e., $\varDelta_P(\RR)\overset{\eqref{bd}}{=}\varDelta_Q(\RR)$, and 
$h\overset{\eqref{bd}}{\equiv} C$  on $\RR$ for a constant $C\in \RR$.  Thus, 
 \begin{equation}\label{hPQc}
\begin{cases}
P(x)=\pt_{\varDelta_P}(x)+ax+b+C\\
Q(x)=\pt_{\varDelta_Q}(x)+ax+b
\end{cases}
\text{for  $x\in \RR$  with $h_Q(x)\underset{\text{\tiny $x\in \RR$}}{\equiv} ax+b$},
\end{equation}  
The definition \eqref{pot}  of potentials in the case $d=1$  immediately implies 
\begin{lemma}\label{lem2} Let $\varDelta\in \Meas^+_{\comp}(\RR)$, and
$s_l:=\inf \supp \varDelta$, $s_r:=\sup \supp \varDelta$. 
Then 
\begin{equation*}
\pt_\varDelta(x)=
\begin{cases}
\varDelta(\RR)x-\int y\dd \varDelta(y)&\text{if $x\geq s_r$},\\
-\varDelta(\RR)x+\int y\dd \varDelta(y)&\text{if $x\leq s_l$}.
\end{cases}
\end{equation*}
\end{lemma}
We set \begin{equation*}
\begin{cases}
t:=\varDelta_P(\RR)=\varDelta_Q(\RR)\in \RR^+,\\ 
S_l:=\inf (S\cup \supp \varDelta_P\cup \supp \varDelta_Q)\in \RR,\\
S_r:=\sup (S\cup \supp \varDelta_P\cup \supp \varDelta_Q)\geq S_l.
\end{cases}
\end{equation*}
In view of $P(x)\equiv Q(x)$ for $x\in \RR\!\setminus\!S$,   by Lemma  \ref{lem2}, we have    
\begin{equation*}
\begin{cases}
tx-\int y\dd \varDelta_P(y)+ax+b+C=
tx-\int y\dd \varDelta_Q(y)+ax+b \quad\text{if $x\geq S_r$},\\
-tx+\int y\dd \varDelta_P(y)+ax+b+C=
-tx+\int y\dd \varDelta_Q(y)+ax+b \quad\text{if $x\leq S_l$},
\end{cases}
\end{equation*}
whence
\begin{equation*}
\begin{cases}
-\int y\dd \varDelta_P(y)+C=
-\int y\dd \varDelta_Q(y),\\
\int y\dd \varDelta_P(y)+C=
\int y\dd \varDelta_Q(y).
\end{cases}
\end{equation*}
Adding these equalities, we obtain $C=0$. Thus, we get 
 \eqref{PQpR} together with \eqref{ptd}.

\paragraph{\bf The general case of an open set  $O\subset \RR^d$.}
Let's start again with the representations \eqref{hPQ}. We set  
 \begin{subequations}\label{d}
\begin{align}
{\mathsf S}&\overset{\text{\tiny closed}}{:=}S\bigcup \supp \varDelta_q\bigcup \supp \varDelta_p\overset{\text{\tiny compact}}{\Subset} O,
\tag{\ref{d}S}\label{{d}S}\\
w:=p-q,&\quad  \varDelta_w\overset{\eqref{df:cm}}{:=}c_d\bigtriangleup\!w=\varDelta_p-\varDelta_q\in \Meas({\mathsf S})\subset
\Meas_{\comp}(O).
\tag{\ref{d}w}\label{{d}w}
\end{align}
\end{subequations}
This difference $w\in \sbh_*(O)-\sbh_*(O)$ of subharmonic functions, i.e., a $\delta$-subharmonic function   \cite{Arsove}, \cite{Arsove53p}, \cite[3.1]{KhaRoz18},  is uniquely defined on $O$ outside a polar set (cf. \eqref{Dom})
\begin{equation}\label{Domd}
\Dom w 
:=\left\{x\in O\colon
\inf\left\{ \int_{0}\frac{\varDelta_w^-(x,t)}{t^{d-1}} \dd t, \int_{0}\frac{\varDelta_w^+(x,t)}{t^{d-1}} \dd t\right\}<+ \infty 
\right\}\overset{\eqref{{d}S}}{\subset} {\mathsf S},
\end{equation}
and $w\equiv 0$ on $O\!\setminus\!{\mathsf S}$ since $p=q$ outside $S\subset {\mathsf S}$ in \eqref{{d}w}, and $p,q\in \har(O\!\setminus\!{\mathsf S})$.  The Riesz charge $\varDelta_w
\overset{\eqref{d}}{\in} \Meas_{\comp}(O)$ of this $\delta$-subharmonic function $w$ on $O$ is also uniquely determined on $O$ with $\supp |\varDelta_w|\subset {\mathsf S}$ \cite[Theorem 2]{Arsove}. 
The function $w\colon O\!\setminus\!\Dom w\to \overline \RR$ can be extended from $O$ to the whole of $\RR^d\!\setminus\!\Dom w$ 
 by zero values:
\begin{equation}\label{d0}
w\equiv 0 \quad\text{on }\RR^d\!\setminus\!{\mathsf S}\overset{\eqref{{d}S}}{\supset} \RR^d\!\setminus\!O, 
\quad \varDelta_w=\varDelta_p-\varDelta_q\overset{\eqref{{d}w}}{\in} \Meas({\mathsf S}). 
\end{equation}
This function $w$ on $\RR^d\!\setminus\!\Dom w$
 is still a $\delta$-subharmonic function, but already on $\RR^d$, since $\delta$-subharmonic functions are defined locally \cite[Theorem 3]{Arsove}. The 
Riesz charge of this  $\delta$-subharmonic function $w\colon \RR^d\!\setminus\!\Dom d \to \overline \RR$ on $\RR^d$ is the same charge
$\varDelta_d\overset{\eqref{{d}w}}{\in} \Meas({\mathsf S})$. There is a canonical representation  \cite[Definition 5]{Arsove} of $w$ such that \cite[Theorem 5]{Arsove}
\begin{subequations}\label{PQ}
\begin{align}
w=P-Q \quad\text{on }\RR^d\!\setminus\!\Dom w, &\quad\text{where }
P,Q\in \sbh_*(\RR^d)\cap\har(\RR^d\!\setminus\!{\mathsf S})
\tag{\ref{PQ}d}\label{{PQ}s}
\\
\intertext{are functions with Riesz measures} 
\varDelta_P\overset{\eqref{df:cm}}{:=}c_d\bigtriangleup\!P=\varDelta_w^+
\overset{\eqref{{PQ}s}}{\in} \Meas^+({\sf S}),& \quad 
\varDelta_Q\overset{\eqref{df:cm}}{:=}c_d\bigtriangleup\!Q=\varDelta_w^-\overset{\eqref{{PQ}s}}{\in} \Meas^+({\sf S}),
\tag{\ref{PQ}$\varDelta$}\label{{PQ}D}
\\
P\overset{\eqref{d0},\eqref{{PQ}s}}{\equiv}Q \quad \text{on }\RR^d\!\setminus\!{\mathsf S},&
\tag{\ref{PQ}$\equiv$}\label{{PQ}d}
\\
\intertext{and there is a function $s\in \sbh_*(O)$ with Riesz measure}
\varDelta_s=\varDelta_p-\varDelta_w^+\overset{\eqref{d0},\eqref{{PQ}D}}{=}&\varDelta_q-\varDelta_w^- \in \Meas^+({\mathsf S})
\tag{\ref{PQ}s}\label{{PQ}vD}
\\
\quad\text{such that } &\begin{cases}
p=P+s,\\
q=Q+s
\end{cases}
\quad \text{on } O.
\tag{\ref{PQ}r}\label{{PQ}r}
\end{align}
\end{subequations}
By \eqref{{PQ}s} and \eqref{{PQ}d},  all conditions  of Theorem \ref{lemPQ} 
are fulfilled for functions $P,Q$ from \eqref{PQ} instead of $p,q$, but in the case  $\RR^d$ instead of $O$ and ${\mathsf S}$ instead of $S$.  Thus,  we have 
\eqref{ptd} in the form 
\begin{subequations}\label{dpt}
\begin{align}
\varDelta_w^+(O)\overset{\eqref{{PQ}D}}{=}\varDelta_P(\RR^d)
\overset{\eqref{ptd}}{=}\varDelta_Q(\RR^d)
\overset{\eqref{{PQ}D}}{=}\varDelta_w^-(O), 
\tag{\ref{dpt}$\varDelta$}\label{{dpt}d}
\\
\pt_{\varDelta_w^+}\overset{\eqref{{PQ}D}}{=}\pt_{\varDelta_P}=\pt_{\varDelta_Q}\overset{\eqref{{PQ}D}}{=}\pt_{\varDelta_w^-} \quad \text{on $\RR^d\!\setminus\!{\mathsf S}$},
\tag{\ref{dpt}p}\label{{dpt}pt}
\end{align}
\end{subequations}
and  the representations \eqref{PQpR} in the form
\begin{equation}\label{PQpR+}
\begin{cases}
P\overset{\eqref{PQpR}}{=}\pt_{\varDelta_P}+h
\overset{\eqref{{dpt}pt}}{=}\pt_{\varDelta_w^+}+h\\
Q\overset{\eqref{PQpR}}{=}\pt_{\varDelta_Q}+h
\overset{\eqref{{dpt}pt}}{=}\pt_{\varDelta_w^-}+h
\end{cases}
\quad \text{on $\RR^d$, \quad  $h\in \har(\RR^d)$}.
\end{equation}
Hence, by representation  \eqref{{PQ}r},  we obtain the following representations
\begin{equation}\label{rPQ}
\begin{split}
&\begin{cases}
p\overset{\eqref{{PQ}r},\eqref{PQpR+}}{=}\pt_{\varDelta_w^+}+h+s,\\
q\overset{\eqref{{PQ}r},\eqref{PQpR+}}{=}\pt_{\varDelta_w^-}+h+s
\end{cases}
\quad \text{on  $O$}, \\
h\in \har(\RR^d),\quad &\pt_{\varDelta_w^+}\overset{\eqref{{dpt}pt}}{=}\pt_{\varDelta_w^-}
\text{ on $\RR^d\!\setminus\!{\mathsf S}$,}\quad
\varDelta_w^+(O)\overset{\eqref{{dpt}d}}{=}\varDelta_w^-(O). 
 \end{split}
\end{equation} 
Besides,  the function  $l\overset{\eqref{{PQ}vD}}{:=}s-\pt_{\varDelta_s}$ 
is harmonic on $O$ by Weyl's lemma on the Laplace equation 
$\bigtriangleup\,(s-\pt_{\varDelta_s})\overset{\eqref{{PQ}vD}}{=}\varDelta_s-\varDelta_s=0$.  Hence 
\begin{equation}\label{rPQs}
\begin{split}
&\begin{cases}
p\overset{\eqref{rPQ}}{=}\pt_{\varDelta_w^+}+\pt_{\varDelta_s}+h+l,\\
q\overset{\eqref{rPQ}}{=}\pt_{\varDelta_w^-}+\pt_{\varDelta_s}+h+l
\end{cases}
\quad \text{on  $O$, where $h\in \har(\RR^d)$ and  $l\in \har(O)$,} 
\\
& \pt_{\varDelta_w^+}+\pt_{\varDelta_s}\overset{\eqref{rPQ}}{=}\pt_{\varDelta_w^-}+\pt_{\varDelta_s}\text{ on } \RR^d\!\setminus\!{\mathsf S},\quad 
\varDelta_w^+(O)\overset{\eqref{rPQ}}{=}\varDelta_w^-(O). 
\end{split}
\end{equation}
By construction, we have
\begin{equation*}
\begin{cases}
\pt_{\varDelta_w^+}+\pt_{\varDelta_s}=\pt_{\varDelta_w^++\varDelta_s}
\overset{\eqref{{PQ}vD}}{=}\pt_{\varDelta_p},
\\
\pt_{\varDelta_w^-}+\pt_{\varDelta_s}=\pt_{\varDelta_w^-+\varDelta_s}
\overset{\eqref{{PQ}vD}}{=}\pt_{\varDelta_q},
\end{cases} 
\varDelta_p(O)=(\varDelta_w^++\varDelta_s)(O)
\overset{\eqref{{PQ}vD}}{=}
(\varDelta_w^-+\varDelta_s)(O)=\varDelta_p(O).
\end{equation*}
Hence, if we set $H:=h+l\in \har(O)$, then, by \eqref{rPQs}, we obtain 
exactly \eqref{PQpR}, as well as \eqref{ptd}, with the only difference being that in \eqref{ptd} we have ${\mathsf S}\overset{\eqref{{d}S}}{\supset} S$ instead of $S$. 
Moreover, it immediately follows from the representation \eqref{PQpR} and the condition $p = q$ on ${\mathsf S}\!\setminus\!S\overset{\eqref{{d}S}}{\subset} O\!\setminus\!S$ that $\pt_{\varDelta_p}=\pt_{\varDelta_q}$ on $\RR^d\!\setminus\!S=
(\RR^d\!\setminus\!{\mathsf S})\bigcup ({\mathsf S}\!\setminus\!S)$. Theorem \ref{lemPQ} is proved.
\end{proof} 

\subsection{\sf Duality between balayage of measures and their potentials}\label{Db}

In this Subsec.~\ref{Db}, the equivalence of the first four statements of the Main Lemma according to the scheme 
\begin{equation}\label{diag}
\begin{array}{ccc}
\text{\ref{Im}} &\longrightarrow & \text{\ref{IIm}} \\
 & \nwarrow &  \downarrow\\
\text{\ref{IVm}} &  \longleftrightarrow& \text{\ref{IIIm}}
\end{array}
\end{equation}
will be established. 
We write A$\overset{{\it \tiny proof}}{\Longrightarrow}$B
if  the implication A$\Rightarrow$B is proved or discussed below.

\ref{Im}$\overset{{\it \tiny proof}}{\Longrightarrow}$\ref{IIm}.
By Proposition \ref{KOc}(\ref{Ki}-\ref{Kii}) and \cite[Theorem 1.7]{Gardiner}, if $h\in \har(S_O)$ is harmonic  on  the inward   filling
$S_O\overset{\eqref{SO}}{=}\infill(\supp\varDelta\cup \supp \upomega)=\infill S_O\Subset O$
 of $S$,   then there are functions $h_k\underset{\text{\tiny $k\in \NN$}}{\in} \har (O)$ such that the sequence  $(h_k)_{k\in \NN}$ converges to $h$ in the space  $C (S_O)$
of all continuous functions on the compact set $S_O\Subset O$ with $\sup$-norm. Hence, 
\begin{multline*}
\int_{S_O} h \dd {\varDelta}= \int_{S_O}  \lim_{k\to \infty}
h_k \dd {\varDelta}  = \lim_{k\to \infty} \int_{S_O} h_k \dd {\varDelta}= \lim_{k\to \infty} \int_{O} h_k \dd {\varDelta}
\\\overset{\text{\ref{Im}},\eqref{bhar}}{=}\lim_{k\to \infty} \int_{O} h_k \dd \upomega= \lim_{k\to \infty} \int_{S_O} h_k \dd \upomega= 
 \int_{S_O} \lim_{k\to \infty} h_k \dd \upomega=
 \int_{S_O} h \dd {\varDelta}.
\end{multline*}
The statement \ref{IIm} of the Main Lemma  is established.

\ref{IIm}$\overset{{\it \tiny proof}}{\Longrightarrow}$\ref{IIIm}.
If $x\notin S_O$, then the subharmonic function 
\begin{equation}\label{ux}
u_x\colon y\underset{\text{\tiny $y\in \RR^d$}}{\longmapsto} K_{d-2}(y,x)
\end{equation}
is harmonic on $S_O$. Hence, for $x\notin S_O$, 
\begin{multline*}
\pt_{\varDelta}(x)=\int_{\supp \varDelta}K_{d-2}(\cdot,x)\dd \varDelta=
\int_{S_O}K_{d-2}(\cdot,x)\dd \varDelta
\overset{\eqref{ux}}{=}\int_{S_O}u_x\dd \varDelta
\\
\overset{\text{\ref{IIm}},\eqref{bhar}}{=}
\int_{S_O}u_x\dd \upomega
\overset{\eqref{ux}}{=}\int_{S_O}K_{d-2}(\cdot,x)\dd \upomega=\int_{\supp \upomega}K_{d-2}(\cdot,x)\dd \varDelta=\pt_{\upomega}(x).
\end{multline*}
The statement \ref{IIIm} of the Main Lemma is established.

\ref{IIIm}$\overset{{\it \tiny proof}}{\Longrightarrow}$\ref{IVm}.
This implication is obvious if we choose $p:=\pt_{\upomega}$ and $q:=\pt_{\varDelta}$.

\ref{IVm}$\overset{{\it \tiny proof}}{\Longrightarrow}$\ref{IIIm}.
This implication is a special case of Theorem \ref{lemPQ} with the conclusion \eqref{ptd}.

\ref{IIIm}$\overset{{\it \tiny proof}}{\Longrightarrow}$\ref{Im}.
We use the following 
\begin{lemma}[{\rm \cite[Lemma 1.8]{Gardiner}}]\label{l1} Let $F$ be a compact subset in\/  $\RR^d$, $h\in \har (F)$, and $b \in \RR^+\!\setminus\!0$. Then  there are points $y_1,y_2,\dots, y_m$ in $\RR^d\!\setminus\! F$ and constants  $a_1, a_2, \dots , a_m\in \RR$ such that
\begin{equation}\label{hk}
\Bigl|h(x)-\sum_{j=1}^{m} a_j k_{d-2}\bigl(|x-y_j|\bigr)\Bigr|<b
\quad\text{for all $x\in F$}.
\end{equation}
\end{lemma}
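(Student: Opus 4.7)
The plan is to prove Lemma~\ref{l1} by a Hahn--Banach duality argument of Runge type. Let $Y\subset C(F)$ be the linear span of the functions $f_y\colon x\longmapsto k_{d-2}\bigl(|x-y|\bigr)$ with $y\in\RR^d\!\setminus\!F$, regarded as continuous functions on the compact set $F$. Inequality \eqref{hk} for every $b\in \RR^+\!\setminus\!0$ amounts to saying that each $h\in\har(F)$ lies in the sup-norm closure of $Y$ in $C(F)$. By the Hahn--Banach theorem and the Riesz representation theorem, this density reduces to the following annihilator claim, which I would take as the first reduction: for any signed measure $\mu\in\Meas(F)$, if $\int_F f_y\dd\mu=0$ for each $y\in\RR^d\!\setminus\!F$, then $\int_F h\dd\mu=0$ for each $h\in\har(F)$.

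The hypothesis on $\mu$ reads $\pt_\mu(y)=0$ for every $y\in\RR^d\!\setminus\!F$; since $\supp|\mu|\subset F$, the potential $\pt_\mu$ is harmonic on $\RR^d\!\setminus\!F$, so $\pt_\mu\equiv 0$ on $\RR^d\!\setminus\!F$ in the strong sense, while $\pt_\mu\in L^1_{\loc}(\RR^d)$ and, in the distributional sense, $\bigtriangleup\pt_\mu=(1/c_d)\mu$. Next I would fix any $h\in\har(F)$ and an open $U\supset F$ with $h\in\har(U)$, choose an auxiliary open $W$ with $F\subset W\Subset U$, and take a cut-off $\chi\in C^{\infty}_c(U)$ with $\chi\equiv 1$ on $W$. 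Setting $\widetilde h:=\chi h\in C^{\infty}_c(\RR^d)$, we have $\widetilde h\equiv h$ on $W$, so $\bigtriangleup\widetilde h=0$ on $W$; and $\widetilde h\equiv 0$ off $\supp\chi$; hence $\supp\bigtriangleup\widetilde h\subset \supp\chi\!\setminus\!W\subset \RR^d\!\setminus\!F$.

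The key computation is then a distributional integration by parts, legitimate because $\widetilde h$ is smooth with compact support and $\pt_\mu$ is locally integrable:
\begin{equation*}
\int_F h\dd\mu=\int \widetilde h\dd\mu=c_d\langle \bigtriangleup\pt_\mu,\widetilde h\rangle=c_d\int(\bigtriangleup\widetilde h)\cdot\pt_\mu\,\dd x=0,
\end{equation*}
since $\bigtriangleup\widetilde h$ is supported where $\pt_\mu\equiv 0$. This establishes the annihilator claim and therefore Lemma~\ref{l1}.

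I expect the main obstacle to be a clean justification of the middle identity $c_d\langle\bigtriangleup\pt_\mu,\widetilde h\rangle=c_d\int(\bigtriangleup\widetilde h)\cdot\pt_\mu\dd x$; it rests on the local integrability of $\pt_\mu$ across $\supp\mu$, which is standard for compactly supported measures but merits separate attention in the low-dimensional regimes $d=2$ (logarithmic kernel) and $d=1$ (linear kernel unbounded at infinity), both of which are controlled by the asymptotics already recorded in Proposition~\ref{prO}.
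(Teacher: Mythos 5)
The paper offers no proof of this lemma at all---it is imported verbatim from Gardiner's book (Lemma 1.8 there), whose proof is precisely the Hahn--Banach/Riesz duality argument you give. Your argument is correct: the reduction to the annihilator claim, the pointwise vanishing of $\pt_{\mu}$ on $\RR^d\!\setminus\!F$, and the cutoff computation $\int_F h\dd\mu=c_d\int \pt_{\mu}\,(\bigtriangleup \widetilde h)\dd x=0$ (using that $\bigtriangleup\widetilde h\equiv 0$ on the open set $W\supset F$ while $\pt_{\mu}\equiv 0$ off $F$, so the product vanishes identically) all go through in every dimension $d\geq 1$ with the paper's normalization $c_d\bigtriangleup K_{d-2}(\cdot,y)=\delta_y$.
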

Applying Lemma \ref{l1} to the compact set 
$F:=S_O \Subset O$
and a function $h\in \har (O)$, we obtain
\begin{multline*}
\Bigl|\int_O h \dd  (\upomega -\varDelta)\Bigl|
=\Bigl|\int_{S_O} h \dd  (\upomega -\varDelta)\Bigl|
\overset{\text{\ref{IIIm}}}{=}
\biggl|\int_{S_O} h \dd (\upomega -\varDelta)-\sum_{j=1}^{m}a_j \Bigl(\overset{0}{\overbrace{{\pt}_{\upomega} (y_j)- {\pt}_{\varDelta}(y_j)}}\Bigr) \biggr|
\\
\overset{\eqref{pot}}{=}
\biggl|\int_{S_O} h \dd (\upomega -\varDelta)-\sum_{j=1}^{m}a_j \Bigl(\int_{S_O}K_{d-2}(y,y_j) \dd\upomega (y)- 
\int_{S_O}K_{d-2}(y,y_j) \dd \varDelta(y)\Bigr) \biggr|
\\
\overset{\eqref{{kK}K}}{=}
\biggl|\int_{S_O} h (y) \dd (\upomega -\varDelta)(y)-
\int_{S_O}\sum_{j=1}^{m}a_j k_{d-2}\bigl(|y-y_j|\bigr) \dd (\upomega-\varDelta)(y) \biggr|
\\
\overset{\eqref{hk}}{\leq} \sup_{y\in S_O} \Bigl| h (y)-\sum_{j=1}^{m} a_jk_{d-2}\bigl(|y-y_j|\bigr)\Bigr| \bigl(\upomega(O)+\varDelta (O)\bigr)
\overset{\eqref{hk}}{\leq}  b \bigl(\upomega(O)+\varDelta (O)\bigr)
\end{multline*}
for each  $b\in \RR^+\!\setminus\!0$. Hence $\varDelta\preceq_{\har(O)}\upomega$. Thus,
we  obtain \ref{Im} and   complete \eqref{diag}.  

\subsection{\sf The symmetric Poisson\,--\,Jensen formula for measures and their potentials}\label{PJsf} 

In this Subsec.  \ref{PJsf},  we complete the proof of the Main Lemma by the scheme
\begin{equation}\label{diag2}
\bigl(\text{\ref{IIm}}\cap \text{\ref{IIIm}}\bigr) \longrightarrow  \text{\ref{Vm}}
 \longrightarrow \text{\ref{VIm}} \longrightarrow \text{\ref{VIIm}}
 \longrightarrow \text{\ref{IVm}} ,
\end{equation}
where $\text{\ref{IIm}}\cap \text{\ref{IIIm}}$ means that statements \ref{IIm} and \ref{IIIm} are simultaneously satisfied, and  the equivalence $(\text{\ref{IIm}}\cap \text{\ref{IIIm}})\Leftrightarrow$\ref{IVm} of the extreme statements $(\text{\ref{IIm}}\cap \text{\ref{IIIm}})$  and \ref{IVm} of \eqref{diag2} has already been proved in the previous Subsec.~\ref{Db} by the scheme \eqref{diag}.   

$(\text{\ref{IIm}}\cap \text{\ref{IIIm}})\overset{{\it \tiny proof}}{\Longrightarrow}$\ref{Vm}. 
Let $u\overset{\eqref{{fPJ}B}}{\in} \sbh_*(\clos B )$, where $S_O\overset{\eqref{{fPJ}B}}{\subset} B\Subset O$.
We can choose an  open set $O'$ such that $B\Subset O'\Subset O$ and 
$u\in \sbh_*(\clos O')$. Consider first the case 
\begin{equation}\label{tfin}
-\infty<\int u\dd \varDelta,\quad \text{where $\supp \varDelta\overset{\eqref{SO}}{\subset} S_O\Subset O'$.}
\end{equation}
Let 
\begin{equation}\label{mu'}
\mu':=\varDelta_u\bigm|_{\clos O'}
\end{equation} 
be the restriction of Riesz measure of $u\in \sbh_*(\clos O')$ 
to $\clos O'\Subset O$.  By the Riesz Decomposition Theorem   \cite[Theorem 3.7.1]{R}, \cite[Theorem 3.9]{HK}, \cite[Theorem 4.4.1]{AG}, \cite[Theorem 6.18]{Helms} we obtain a representation  
\begin{equation}\label{RDT}
u={\pt}_{\mu'}+h\quad \text{on } O',\quad \text{where  $h\in \har (O' )$ is continuous and bounded on $S_O$.}
\end{equation}
Integrating this representation with respect to $\dd \upomega$ and $\dd \varDelta$, we obtain 
\begin{subequations}\label{utm}
\begin{align}
\int u \dd \upomega&\overset{\eqref{RDT}}{=} \int {\pt}_{\mu'} \dd \upomega +\int h \dd \upomega, \quad \supp \upomega\overset{\eqref{SO}}{\subset} S_O, 
\tag{\ref{utm}$\upomega$}\label{{utm}m}
\\
\int u \dd \varDelta &\overset{\eqref{RDT}}{=} \int {\pt}_{\mu'} \dd \varDelta  +\int h \dd \varDelta, \quad \supp \varDelta \overset{\eqref{SO}}{\subset} S_O,
\tag{\ref{utm}$\varDelta$}\label{{utm}o}
\end{align}
\end{subequations}
where the three integrals in \eqref{{utm}o}  are finite, although in the equality \eqref{{utm}m} the first two integrals can take simultaneously the value of $-\infty$, but the last integral in \eqref{{utm}m} is finite.  Therefore, the difference \eqref{{utm}m}$-$\eqref{{utm}o} of these two equalities is well defined:
\begin{equation}\label{if}
\int u \dd \upomega-\int u \dd \varDelta\overset{\eqref{utm}}{=} \int {\pt}_{\mu'}\dd \upomega-
 \int {\pt}_{\mu'} \dd \varDelta  +\int_{S_O} h \dd (\upomega-\varDelta),
\end{equation}
 where the first and third integrals can simultaneously take the value of $-\infty$, and the remaining integrals are finite. By the statement \ref{IIm}
we have $\varDelta\simeq_{\har(S_O)}\upomega$. Hence the\textit{ last integral\/} in \eqref{if} \textit{vanishes\/} according to  \eqref{bhar}.   
Using Fubini's Theorem on repeated integrals, in view of the symmetry property of kernel  $K_{d-2}$ in \eqref{{kK}K},  we have
\begin{multline}\label{ct}
\int {\pt}_{\mu'}\dd \varDelta=\int \int 
K_{d-2}(y,x) \dd \mu' (y)\dd \varDelta (x)\\
=\int  \int K_{d-2}(x,y) \dd \varDelta (x) \dd \mu' (y)\overset{\eqref{mu'}}{=}\int_{\clos O'}{\pt}_{\varDelta} \dd \varDelta_u,
\end{multline}
and the same way
\begin{multline}\label{chmu}
\int {\pt}_{\mu'}\dd \upomega =\int \int K_{d-2}(y,x) \dd \mu' (y)\dd \upomega (x)\\ =\int  \int K_{d-2}(x,y) \dd \upomega (x) \dd \mu' (y)\overset{\eqref{mu'}}{=}\int_{\clos O'}{\pt}_{\upomega} \dd \varDelta_u
\end{multline}
even if the integral on the left side of equalities \eqref{chmu} takes the value  $-\infty$ because the integrand $K_{d-2}(\cdot, \cdot )$ is bounded from above on the compact set  $\clos O'\times \clos O'$ \cite[Theorem 3.5]{HK}. Hence  equality  \eqref{if} can be rewritten as
\begin{equation*}
\int u \dd \upomega-\int u \dd \varDelta= \int_{\clos O'}{\pt}_{\upomega} \dd \varDelta_u-  \int_{\clos O'}{\pt}_{\varDelta} \dd \varDelta_u 
\end{equation*}
or in  the form
\begin{equation}\label{OB}
\int u \dd \upomega+\int_{\clos O'}{\pt}_{\varDelta} \dd \varDelta_u
= \int u \dd \varDelta+\int_{\clos O'}{\pt}_{\upomega} \dd \varDelta_u. 
\end{equation}
But by the statement \ref{IIIm}, we have 
\begin{equation*}
{\pt}_{\upomega}\overset{\rm \ref{IIIm}}{=}{\pt}_{\varDelta} \quad \text{on $\RR^d\!\setminus\! S_O\supset \!\clos O'\!\setminus\! B$}. 
\end{equation*} 
Hence, by equality \eqref{OB}, we obtain equality \eqref{{fPJ}f}
 in the case \eqref{tfin}. 

If condition \eqref{tfin}  is not fulfilled, then from the representation \eqref{{utm}o} it follows that the integral on the left-hand side of \eqref{ct} also takes the value $-\infty$. The equalities \eqref{ct} is still true  \cite[Theorem 3.5]{HK}. Hence, the second  integral on the right side of the formula \eqref{{fPJ}f} also takes the value $-\infty$ and this formula \eqref{{fPJ}f} remains true. 

\ref{Vm}$\overset{{\it \tiny proof}}{\Longrightarrow}$\ref{VIm}.
Let   $q\in \sbh_*(S_O)$ be a function with  Riesz measure $\varDelta_q=\varDelta$. Then there is  a function $h\in \har(O)$ such 
that $q=\pt_{\varDelta}+h$ on $O$. By the statement  \ref{Vm}, we have 
\eqref{{fPJ}f} for $B=S_O$. If we set $p:=\pt_{\upomega}+h$, then $\varDelta_p=\upomega$, and  \eqref{2R} follows from   \eqref{{fPJ}f} with $B=S_O$.

\ref{VIm}$\overset{{\it \tiny proof}}{\Longrightarrow}$\ref{VIIm}.
We set $q:=\pt_{\varDelta}\in \sbh_*(\RR^d)$ with $\varDelta_q=\varDelta$. By statement \ref{VIm}, there is a function   $p\in \sbh_*(\RR^d)$ with $\varDelta_p=\upomega$ such that we have  \eqref{2R}. In particular, the equality in \eqref{2R} is true for each special subharmonic function $u_x\colon y\underset{\text{\tiny $y\in \RR^d$}}{\longmapsto} K_{d-2}(y,x)$,  $x\in \RR^d$, and we obtain \ref{VIIm}.

\ref{VIIm}$\overset{{\it \tiny proof}}{\Longrightarrow}$\ref{IVm}.
Each special function $u_x$ in \ref{VIIm} is subharmonic on $\RR^d$ with Riesz measure  $\delta_x$. If $x\in O\!\setminus\!S$, where $S_O\subset S\Subset O$, then $S_O\cap \supp \delta_x=\varnothing$. Thus,   
\begin{equation}\label{2R+}
\int_{S_O} p\dd \delta_x =\int_{S_O}q\dd \delta_x=0
\quad\text{for each $x\in O\!\setminus\!S$}. 
\end{equation}
Hence, by  \eqref{2R} with $u_x$ instead of $u$, we obtain 
\begin{equation*}
\pt_{\varDelta}(x)=\int_{S_O} K(y,x) \dd \varDelta(y)=\int_{S_O} u_x \dd \varDelta\overset{\eqref{2R},\eqref{2R+}}{=}
\int_{S_O} u_x \dd \upomega=\int_{S_O} K(y,x) \dd \upomega(y)=\pt_{\upomega}(x)
\end{equation*} 
for each  $x\in O\!\setminus\!S$. Thus, we obtain the statement \ref{IVm}
for $q:=\pt_{\varDelta}$ and $p:=\pt_{\upomega}$.

The Main Lemma is proved.

\section{Proof of the Main Theorem}\label{PMT} 
\setcounter{equation}{0}

[\ref{Is}]$\overset{{\it \tiny proof}}{\Longrightarrow}$[\ref{IIs}]. 
Without loss of generality, we can assume that $S=S_O$ in \eqref{{Spq}S}. 
Then   the statement [\ref{IIs}] with \eqref{pqh} follows from
Theorem \ref{lemPQ} with \eqref{ptd}--\eqref{PQpR}. 

[\ref{IIs}]$\overset{{\it \tiny proof}}{\Longrightarrow}$[\ref{IIIs}].
By the equality  $\pt_{\varDelta_q}\overset{\eqref{pqh}}{=}\pt_{\varDelta_p}$ on $\RR^d\!\setminus\!S_O$, we have the statement  \ref{IIIm} of the Main  Lemma
for $\varDelta:=\varDelta_q\in \Meas^+(S)$ and $\upomega:=\varDelta_p\in \Meas^+(S)$. 
By implication  \ref{IIIm}$\Rightarrow$\ref{Vm} of the Main Lemma, we obtain 
\begin{equation}\label{fPJpq}
\int_{\supp \varDelta_q} u\dd \varDelta_q+\int_B {\pt}_{\varDelta_p} \dd \varDelta_u\overset{\eqref{fPJ}}{=}
 \int_{\supp \varDelta_p} u\dd \varDelta_p +\int_B {\pt}_{\varDelta_q} \dd \varDelta_u
\end{equation}
for each $B\in \Borel(\RR^d)$ under $S_O\subset B\Subset O$
and  for each $u\in \sbh_*(\clos B)$, where we returned to the separate notation $S\subset S_O:=\infill S$. Obviously, 
 \begin{equation}\label{h}
\int_B h\dd \varDelta_u=\int_B h\dd \varDelta_u
\quad\text{for each $u\in \sbh_*(\clos B)$ and $h\in \har(O)$}.
\end{equation}
Adding \eqref{fPJpq} and \eqref{h}, according to representations \eqref{pqh} of $q$ and $p$, we obtain  
\begin{equation}\label{fPJpq+}
\int_S u\dd \varDelta_q+\int_B p \dd \varDelta_u
\overset{\eqref{fPJ}}{=}
 \int_S u\dd \varDelta_p +\int_B q \dd \varDelta_u,
\end{equation}
where $B$   can be replaced with $B\cap S_{\neq}$.  
This proves \eqref{{2}f} already for a set $B$ and functions $u$ of the form \eqref{{2}B}. Thus, we obtain statement [\ref{IIIs}].

[\ref{IIIs}]$\overset{{\it \tiny proof}}{\Longrightarrow}$[\ref{IVs}].
 All functions $u_x$ in  [\ref{IVs}] are subharmonic on $\RR^d\supset O$.

[\ref{IVs}]$\overset{{\it \tiny proof}}{\Longrightarrow}$[\ref{Is}].
The Riesz measure of $u_x$ is the Dirac measure $\delta_x$, and, by  [\ref{IVs}], 
\begin{equation}\label{PJK}
\int_S u_x\dd \varDelta_q+\int_{B_j} p\dd \delta_x
\overset{\eqref{{2}f}}{=}\int_S u_x\dd \varDelta_p+\int_{B_j} q\dd \delta_x
\quad\text{for each $j\in \NN$ and $x\in O$}. 
\end{equation} 
If $j=0$ and $x\notin S_O=B_0$, then $\supp \delta_x=x\notin S_O$ and
\begin{equation*}
\int_{S_O} p\dd \delta_x\overset{\eqref{PJK}}{=}\int_{S_O} q\dd \delta_x=0
\end{equation*}
These equalities do not depend on $j\in \NN_0$ for points $x\notin S_O$. Hence 
\begin{equation*}
\int_S u_x\dd \varDelta_q\overset{\eqref{PJK}}{=}\int_S u_x\dd \varDelta_p
\quad\text{for each $j\in \NN_0$ and $x\notin S_O\supset S$}. 
\end{equation*}
Therefore, it is follows from \eqref{PJK} that 
\begin{equation*}
\int_{B_j} p\dd \delta_x
\overset{\eqref{PJK}}{=}\int_{B_j} q\dd \delta_x
\quad\text{for each $j\in \NN_0$ and $x\notin S_O$}, 
\end{equation*} 
i.e., $p(x)=q(x)$ for each $j\in \NN_0$ and for every  $x\in B_j\!\setminus\!S_O$. 
Thus, $p(x)=q(x)$  for each  point $x\in \bigcup_{j\in \NN_0}B_j\!\setminus\!S_O
=O\!\setminus\!S_O$, and statement [\ref{Is}] is established.

\section{Duality Theorems for balayage}\label{DT}
\setcounter{equation}{0}
Part of  some  equivalences of the Main Lemma  and the Main Theorem 
allows us to give an internal dual description for  the potentials of measures obtained through the balayage   processes.
Such descriptions in particular cases of Arens\,--\,Singer and Jensen measures 
and their potentials have already found important applications in the study of various problems of function theory \cite[Ch.~3 etc.]{Gamelin},  \cite{Anderson}, \cite{Kha91}, \cite{Kha92I}, \cite{Kha92II}, \cite{Kha93}, \cite{Kha94}, \cite{Kha94BM}, \cite{Koosis96}, \cite{Kha96}, \cite{Kh010}, \cite{Kha01}, \cite{Kha01II}, \cite{Ransford01}, \cite{Kha02It}, \cite{Kha03}, \cite{Kha07},  \cite{Kha12},  \cite{BaiTalKha16}, \cite{KhaRozKha19}, \cite{KhaTalKha15},  \cite{KhaKhaChe09}.

\begin{dualtheorem}[{\rm for $\har(O)$-balayage}]\label{DT1}
Let $\varDelta \in \Meas_{\comp}^+(O)$. 

If  a measure $\upomega \in \Meas_{\comp}^+(O)$ is a\/ $\har (O)$-ba\-la\-ya\-ge of  ${\varDelta}$,  then {\rm (cf. \eqref{ASpc})} 
\begin{subequations}\label{pmu0}
\begin{align}
{\pt}_{\upomega}&\in \sbh_*(\RR^d)\cap 
\har(\RR^d\!\setminus\! \supp \upomega), 
\tag{\ref{pmu0}p}\label{{pmu0}p}
\\ 
{\pt}_{\upomega}&=  {\pt}_{\varDelta} \text{ on $\RR^d \!\setminus\! \infill_O (\supp \varDelta  \cup \supp \upomega)$}.
\tag{\ref{pmu0}=}\label{{pmu0}o}
\end{align}
\end{subequations}

\underline{Conversely}, suppose that there are a compact subset $S\Subset O$
and  a function $p$ such that 
\begin{subequations}\label{p}
\begin{align}
p&\overset{\text{cf.}\eqref{{pmu0}p}}{\in} \sbh (O)\cap  \har (O\!\setminus\!S),
\tag{\ref{p}p}\label{{pmu0+}p}
\\
p&\overset{\text{cf.} \eqref{{pmu0}o}}{=} {\pt}_\varDelta \quad\text{on $O\!\setminus\!S$}.
\tag{\ref{p}=}\label{{pmu0+}o}
\end{align}
\end{subequations}
Then the  Riesz measure
\begin{equation}\label{mu}
\upomega\overset{\eqref{df:cm}}{:=}c_d\bigtriangleup\! p \overset{\eqref{{pmu0+}p}}{\in} \Meas^+(S)\subset \Meas_{\comp}^+(O)
\end{equation} 
of this function $p$ is a $\har(O)$-balayage of  the measure $\varDelta$. 
\end{dualtheorem}
\begin{proof}  Properties \eqref{pmu0} for $\upomega\succeq_{\har(O)}\varDelta$ directly follow  from the implication \ref{Is}$\Rightarrow$\ref{IIIs} of the Main Lemma. In the opposite direction, we can use  the implication \ref{IVm}$\Rightarrow$\ref{Im} of the Main Lemma with  $p$ from \eqref{p} and $q:=\pt_{\varDelta}$.
\end{proof}

\begin{dualtheorem}[{\rm for $\sbh(O)$-balayage}]\label{DTsbh}
Let $\varDelta \in \Meas^+_{\comp}(O)$. If $\upomega \succeq_{\sbh(O)}\varDelta$,  then we have \eqref{pmu0} and 
${\pt}_{\upomega}\geq {\pt}_{\varDelta}$ on\/ $\RR^d$.
\underline{Conversely}, suppose that there are a compact subset $S$ in $O$
containing $\supp \varDelta$, and  a function $p$ satisfying \eqref{p} such that\/
\begin{equation}\label{pSO}
p\geq {\pt}_{\varDelta} \quad\text{ on\/ $S_O:=\infill (S)$}. 
\end{equation}
Then the  Riesz measure \eqref{mu} of  
 this function $p$ is a $\sbh(O)$-balayage of the measure $\varDelta$.
\end{dualtheorem} 
\begin{proof}  If $\varDelta\preceq_{\sbh(O)}\upomega$,  
then $\varDelta\preceq_{\har(O)}\upomega$, which was noted earlier in \S\ref{Bal}\eqref{4}, and, by Duality Theorem \ref{DT1}, we obtain \eqref{pmu0}.  Besides,  functions  $y\underset{\text{\tiny $y\in \RR^d$}}{\longmapsto} K_{d-2}(y,x)$ are subharmonic on  $\RR^d$ for each $x\in \RR^d$, and   
\begin{equation*}
\pt_{\varDelta}(x)=\int K_{d-2}(y,x)\dd \varDelta(y)
\overset{\eqref{bhar0}}{\leq} \int K_{d-2}(y,x)\dd \upomega(y)=
\pt_{\upomega}(x)\quad\text{for each $x\in \RR^d$}.
\end{equation*} 

In the opposite direction,  we set $q:=\pt_{\varDelta}\in \sbh_*(\RR^d)\cap \har(O\!\setminus\!S)$.  By Duality Theorem \ref{DT1}, the  Riesz measure $\varDelta_p\overset{\eqref{mu}}{=}\upomega\in \Meas_{\comp}^+(O)$ of the function $p$ is a  $\har(O)$-balayage of $\varDelta$.
By condition \eqref{{pmu0+}o} in the notation \eqref{pSO},  we have the equality $p=q$ on $O\!\setminus\! S_O\subset O\!\setminus\!S$, and, 
by condition \eqref{{pmu0+}p},   the functions $p$ and $q$ are harmonic on $O\!\setminus\!S$. Thus, the  statement [\ref{Is}] of the Main Theorem is fulfilled.
By the implication  [\ref{Is}]$\Rightarrow$[\ref{IIIs}] of the Main Theorem, using the full symmetric Poisson\,--\,Jensen formula \eqref{{2}f} with  $B\overset{\eqref{{2}B}}{:=}S_O$, we get  
\begin{equation}\label{fS'}
\int_{S} u\dd \varDelta_q+\int_{S_O} p\dd \varDelta_u \overset{\eqref{{2}f}}{=}\int_{S} u\dd \varDelta_p+\int_{S_O} q\dd \varDelta_u
\quad \text{for each $u\overset{\eqref{{2}B}}{\in} \sbh_*(O )$}.
\end{equation}
Hence, by the condition $p\overset{\eqref{pSO}}{\geq} \pt_{\varDelta}=q$ on $S_O$, we obtain 
\begin{multline}\label{uupp}
\int_O u\dd \varDelta+\int_{S_O} q\dd \varDelta_u
=\int_S u\dd \varDelta_q+\int_{S_O} q\dd \varDelta_u
\leq \int_{S} u\dd \varDelta_q+\int_{S_O} p\dd \varDelta_u\\
\overset{\eqref{fS'}}{=} 
\int_{S} u\dd \varDelta_p+\int_{S_O} q\dd \varDelta_u
=\int_{O} u\dd\upomega+\int_{S_O} q\dd \varDelta_u
\quad \text{for each $u\in \sbh_*(O )$}.
\end{multline}
In particular, if $u\in \sbh_*(O)\cap C^{\infty}(O)$, then the function $q$ is $\varDelta_u$-integrable on $S_O$, and it is follows from  \eqref{uupp} that 
\begin{equation*}
\int_O u\dd \varDelta
\overset{\eqref{uupp}}{\leq} 
\int_{O} u\dd \upomega\quad \text{for each $u\in \sbh_*(O )\cap C^{\infty}(O)$}.
\end{equation*}
Hence, by  \S\ref{Bal}\eqref{pr:diff}, we obtain $\varDelta\preceq_{\sbh(O)}\upomega$.
\end{proof}
The following long-known result  for Arens\,--\,Singer and Jensen measures and their potentials on domains in $\RR^d$ with $d\geq 2$ 
has found numerous applications in the theory of functions of 
one and several complex variables,  which is partially reflected in the bibliographic sources listed at the beginning of  Sec.~\ref{DT}. The proof of this result 
 immediately follows from Duality Theorems \ref{DT1} and \ref{DTsbh}, but  already for open sets $O$ in $\RR^d$ with $d\in \NN$.

\begin{dualtheorem}[{\rm \cite[Proposition 1.4, Duality Theorem]{Kha03}}]\label{DT3} Let $x\in O\subset \RR^d$. The map 
\begin{equation}\label{mcP}
{\mathcal  P}_x \colon \upomega \overset{\eqref{pot}}{\longmapsto} {\pt}_{\upomega-\delta_x}
\end{equation}
defines an  affine bijection from $AS_x(O)$ onto $ASP_x(O)$, as well as  
from $J_x(O)$ onto $JP_x(O)$\/ {\rm (see also, in addition,  \eqref{ASpc})}
 with the inverse map
\begin{equation}\label{P-1}
\mathcal P_x^{-1} \colon V
\overset{\eqref{df:cm}}{\longmapsto} 
c_d {\bigtriangleup}V \bigm|_{\RR^d\!\setminus\!x}+\left(1-\limsup_{x\neq y\to x} \frac{V(y)}{-K_{d-2}(x,y)}\right)\cdot \delta_x.
\end{equation}
\end{dualtheorem}

\begin{remark}  Theorems \ref{DT1} and \ref{DTsbh} can also be formulated in a form close to Theorem 3, using some affine bijection of type \eqref{mcP}--\eqref{P-1} and definitions of the generalized Arens\,--\,Singer and Jensen potentials. But such formulations require some development of the theory of $\delta$-subharmonic functions \cite{Arsove}, \cite{Arsove53p}, \cite[3.1]{KhaRoz18} and a delicate approach to upper/lower integrals \eqref{int}
with values in $\overline \RR$. We will not discuss similar interpretations of Theorems  \ref{DT1} and \ref{DTsbh} here.
\end{remark}

\paragraph{\bf Acknowledgements} 
The work was supported by a Grant of the Russian Science Foundation (Project No. 18-11-00002).

{\sl Bashkir State University, Ufa, Bashkortostan, Russian Federation} 


\begin{thebibliography}{86}


\bibitem{Anderson}  Anderson, S. L.: Green's Function, Jensen Measures, and Bounded Point Evaluations. J. Func. Analysis, {\bf 43}, 360--367  (1981)

\bibitem{AG}  Armitage, D. H.,  Gardiner, S. J.: Classical potential theory. Springer Monogr. Math., Springer-Verlag, London (2001)

\bibitem{Arsove} Arsove, M.G.:  Functions representable as differences of subharmonic f\-u\-n\-c\-t\-i\-ons. Trans. Amer. Math. Soc. {\bf 75}, 327--365  (1953)

\bibitem{Arsove53p}  Arsove, M.G.: Functions of potential type. Trans. Amer. Math. Soc. {\bf 75}, 526--551 (1953)

\bibitem{ABR}
 Axler S., Bourdon P., Ramey W., Harmonic function theory, Second edition, Springer-Verlag, New York, 2001, 270 pp.

\bibitem{BagbyGauther} Bagby, Th., Guathier P.M.: Harmonic approximation on closed subsets of Riemannian manifolds. In book:
Complex Potential Theory. Netherlands: Kluwer Academic Publisher.
 75--87 (1994)

\bibitem{BaiTalKha16} 
Bayguskarov, T.Yu., Talipova, G.R., Khabibullin, B.N.:
 Subsequences of zeros for classes of entire functions of exponential type, allocated by restrictions on their growth, (Russian). Algebra i Analiz. {\bf 28}(2), 1--33 (2016);
English transl. in  St. Petersburg Math. J. {\bf 28}(2), 127--151  (2017)

\bibitem{BH} 
Bliedner, J., Hansen,  W.:  Potential Theory. An Analytic and Probabilistic Approach to Balayage, Springer-Verlag, Berlin (1986)


\bibitem{Bourbaki} Bourbaki,  N.: \'El\'ements de Math\'ematique. Livre~VI. Int\'egration (French).
Hermann, Paris (1969)

\bibitem{Schachermayer}  
Bu, S., Schachermayer, W.: Approximation of Jensen me\-a\-su\-res
by image   me\-a\-su\-res  under holomorphic functions and applications.
Trans. Amer. Math. Soc.  {\bf 331}(2), 585--608  (1992)

\bibitem{Chi18} 
Chirka, E.M.:  Potentials on a compact Riemann surface. (Russian)
In book: Complex analysis, mathematical physics, and applications.
Collected papers Tr. Mat. Inst. Steklova. vol. 301.
 MAIK Nauka/Interperiodica, Moscow, 287--319 (2018);
English transl. in Proc. Steklov Inst. Math. vol. 301, 272--303 (2018)

\bibitem{C-R} 
Cole, B.J., Ransford Th.,J:  Sub\-har\-mo\-ni\-ci\-ty wit\-hout
Up\-per Se\-mi\-co\-n\-ti\-nu\-i\-ty. J. Funct. Anal.  {\bf 147}, 420--442  (1997)

\bibitem{C-RJ} 
Cole, B.J., Ransford Th.,J: 
Jensen measures and harmonic measures. J. reine angew. Math. {\bf 541}, 29--53  (2001)

\bibitem{Doob}  Doob, J.L.:  Classical Potential Theory and Its Probabilistic
Counterpart. Grundlehren Math. Wiss. Springer-Verlag, New York (1984)

\bibitem{Gamelin}  Gamelin, T.W.:
 Uniform Algebras and Jensen Measures.  Cam\-b\-r\-i\-d\-ge Univ. Press, Cam\-b\-r\-i\-d\-ge (1978)

\bibitem{Gardiner}  Gardiner, S.J.: Harmonic Approximation.
Cambridge Univ. Press, Cam\-b\-r\-i\-d\-ge (1995)

\bibitem{Gauther_B} Gauthier, P.M.:  Uniform approximation. In book:
Complex Potential Theory. Netherlands: Kluwer Academic Publisher.
 235--271 (1994)

\bibitem{Gauther_C} Gautier, P.M.: Subharmonic extensions and approximations.
Can. Math. Bull. {\bf 37}(5), 46--53 (1994)

\bibitem{quad}  Ghergu, M., Manolaki, M., Netuka, I.,  Render, H.:
Potential theory and approximation: highlights from the scientific work of Stephen Gardiner.  Analysis and Mathematical Physics. {\bf 9}(2), 679--709 (2019)

\bibitem{HN11}
Hansen, W.,  Netuka, I.:  Jensen Measures in Potential Theory. Potential Analysis, {\bf 37}(1), 79--90 (2011).

\bibitem{HN} Hansen, W.,  Netuka, I.:
Reduced functions and Jensen measures.
 Proc.  Amer. Math. Soc. \textbf{146}(1), 153--160  (2018)

\bibitem{HK}  
Hayman, W.K.,  Kennedy, P.B.: Subharmonic functions.  vol. 1.  
Acad. Press, London etc. (1976)

\bibitem{Helms}
 Helms, L.L.: Introduction to Potential Theory, Wiley Interscience, 
New York London Sydney Toronto (1969)

\bibitem{Kha91}
 Khabibullin, B.N.: Sets of uniqueness in spaces of entire functions of a single variable. (Russian) Izv. Akad. Nauk SSSR Ser. Mat., {\bf 55}(5), 1101--1123  (1991); English transl. in Math. USSR-Izv., {\bf 39}(2), 1063--1084  (1992) 

\bibitem{Kha92I}
Khabibullin, B.N.: The least plurisuperharmonic majorant, and multipliers of entire functions. II. (Russian) Sibirsk. Mat. Zh. {\bf 33}(1), 173--178 (1992); English transl. in Siberian Math. J., {\bf 33}(1), 144--148 (1992)

\bibitem{Kha92II}
 Khabibullin, B.N.: The least plurisuperharmonic majorant, and multipliers of entire functions. II. Algebras of functions of finite $\lambda$-type. (Russian) Sibirsk. Mat. Zh., {\bf 33}(3), 186--191 (1992);  English transl. in Siberian Math. J., {\bf 33}(3), 519--524 (1992)

\bibitem{Kha93}
Khabibullin, B.N.: 
The theorem on the least majorant and its applications. I.~Entire and meromorphic functions. (Russian). Izv. RAN. Ser. Mat., {\bf 57}(1),  129--146 (1993);
English transl. in  Russian Acad. Sci. Izv. Math. {\bf 42}(1), 115--131 (1994)

\bibitem{Kha94} 
Khabibullin, B.N.:  The theorem on the least majorant and its applications. II. Entire and meromorphic functions of finite order. (Russian). Izv. RAN. Ser. matem. {\bf 57}(3), 70--91 (1993); English transl. in  Russian Acad. Sci. Izv. Math., {\bf 42}(3), 479--500  (1994)

\bibitem{Kha94BM} 
 Khabibullin, B.N.: Nonconstructive proofs of the Beurling\,--\,Malliavin theorem on the radius of completeness, and nonuniqueness theorems for entire functions. (Russian).
Izv. RAN. Ser. matem. {\bf 58}(4), 125--148 (1994); English transl. in  
 Russian Acad. Sci. Izv. Math., \textbf{45}(1), 125--149  (1995)

\bibitem{Kha96}
 Khabibullin, B.N.: Zero sets for classes of entire functions and a representation of meromorphic functions. (Russian). Matem. Zametki. {\bf 59}(4), 611--617 (1996); English transl. in  Math. Notes, \textbf{59}(4), 440--444  (1996)

\bibitem{Kh010}	Khabibullin, B.N.: Dual approach to certain questions for weighted spaces of holomorphic functions. Entire functions in modern analysis (Tel-Aviv, December 14--19, 1997), Israel Math. Conf. Proc., \textbf{15}, Bar-Ilan Univ., Ramat Gan. 207--219  (2001) 

\bibitem{Kha02It} Khabibullin, B.N.: Completeness of sets of complex exponentials in convex sets: open problems. Proceedings of the NATO Advanced Study Institute on Twentieth Century Harmonic Analysis --- A Celebration  (Il Ciocco, Italy, July 2-15, 2000), NATO Sci. Ser. II, Math. Phys. Chem. Printed in the Netherlands, {\bf 33}, eds. James S. Byrnes: Kluwer Acad. Publ., Dordrecht,  371--373 (2001)

\bibitem{Kha01}
 Khabibullin, B.N.: On the Rubel\,--\,Taylor Problem on a Representation of Holomorphic Functions. (Russian). Funktsional. Anal. i Prilozhen. {\bf 35}(3), 91--94 (2001);
English transl. in  Funct. Anal. Appl. {\bf 35}(3), 237--239 (2001)

\bibitem{Kha01II}
Khabibullin, B.N.: Dual representation of superlinear functionals and its applications in function theory. II. (Russian). Izv. RaN. Ser. matem.  \textbf{65}(5), 167--190  (2001); English transl. in
Izv. Math. \textbf{65}(5), 1017--1039  (2001)

\bibitem{Kha03} Khabibullin, B.N.:  Criteria for (sub-)harmonicity and continuation of (sub-)harmonic functions. (Russian).  Sibirsk. Mat. Zh. {\bf44}(4), 905--925 (2003); English transl. in 
 Siberian Math. J. {\bf 44}(4), 713--728 (2003)  

\bibitem{Kha07}
 Khabibullin, B.N.:  Zero sequences of holomorphic functions, representation of~meromorphic functions, and harmonic minorants, (Russian). Mat. Sb. \textbf{198}(2), 121--160 (2007);
English transl. in Sb. Math. \textbf{198}(2), 261--298  (2007)

\bibitem{Kha12}
Khabibullin, B.N.: Completeness of systems of exponentials and sets of uniqueness, (Russian). 4th ed., revised and enlarged, Bashkir State University Press, Ufa  (2012).

\bibitem{KhaKha19}
 Khabibullin, B.N., Khabibullin, F.B.:  On the Distribution of Zero Sets of Holomorphic Functions. III. Inversion Theorems, (Russian) Funktsional. Anal. i Prilozhen., \textbf{53}(2), 42--58  (2019); English transl. in Funct. Anal. Appl. \textbf{53}(2), 110--123 (2019)

\bibitem{KhaKhaChe09}
 Khabibullin, B.N., Khabibullin, F.B., Cherednikova L. Yu.: 
Zero subsequences for classes of holomorphic functions: stability and the entropy of arcwise connectedness. I, II,  (Russian). Algebra i Analiz \textbf{20}(1), 146--236  (2009);
English transl. in St. Petersburg Math. J., \textbf{20}(1), 101--162  (2009) 

\bibitem{KhaRoz18}
Khabibullin, B.N., Rozit, A.P.: On the Distribution of Zero Sets of Holomorphic Functions, (Russian). Funktsional. Anal. i Prilozhen., \textbf{52}(1), 26--42  (2018);
English transl. in Funct. Anal. Appl., \textbf{52}(1), 21--34  (2018) 

\bibitem{KhaRozKha19}
  Khabibullin, B.N., Rozit,  A.P.,  Khabibullina, E.B.: Order versions of the Hahn\,--\,Banach theorem and envelopes. II. Applications to the function theory, (Russian). Complex Analysis. Mathematical Physics, Itogi Nauki i Tekhniki. Ser. Sovrem. Mat. Pril. Temat. Obz., {\bf 162}, VINITI, Moscow,
 93--135 (2019)

\bibitem{KhaTalKha15}
Khabibullin, B.N., Talipova, G.R.,  Khabibullin, F.B.:
 Zero subsequences for Bernstein's spaces and the completeness of exponential systems in spaces of functions on an interval, (Russian). Algebra i Analiz. \textbf{26}(2), 185--215  (2014); English transl. in  St. Petersburg Math. J. \textbf{26}(2), 319--340  (2015)


\bibitem{Koosis} Koosis, P.: The logarithmic integral. II. Cambridge Univ. Press, Cambridge (1992)

\bibitem{Koosis96}   Koosis, P.: Le\c cons sur le th\'eor\`eme Beurling et Malliavin, (French).
 Les Publications CRM, Mont\-r\'e\-al (1996)

\bibitem{KudKha09}
  Kudasheva, E. G., Khabibullin, B.N.: 
 The distribution of the zeros of holomorphic functions of moderate growth in the unit disc and the representation of meromorphic functions there, (Russian).
Matem. Sb. \textbf{200}(9), 1353--1382  (2009); English transl. in  Sb. Math. \textbf{200}(9), 1353--1382  (2009)


\bibitem{Landkoff}
Landkof, N.S.: Foundations of modern potential theory. 
Grundlehren Math. Wiss., vol. 180,  Springer-Verlag, New York Heidelberg (1972)


\bibitem{MOS}
Matsaev, V., Ostrovski\u{\i}  I., Sodin, M.: Variations on the theme of Marcinkiewicz' inequality. J. Anal. Math. \textbf{86}(1), 289--317  (2002) 
	
\bibitem{MenKha19}
Menshikova, E.B.,  Khabibullin, B.N.: On the Distribution of Zero Sets of Holomorphic Functions. II,
(Russian). Funktsional. Anal. i Prilozhen.  \textbf{53}(1), 84--87  (2019); English transl. in
Funct. Anal. Appl. \textbf{53}(1), 65--68  (2019)  

\bibitem{Meyer}
Meyer, P.-A.:  Probability and Potentials, Blaisdell Publ. Co., Waltham, Mass.--Toronto--London, 1966 

\bibitem{R}  Ransford, Th.: Potential Theory in the Complex 
Plane.   Cambridge University Press, Cambridge (1995)

\bibitem{Ransford01}
Ransford, Th.J.: Jensen measures. Approximation, complex analysis and potential theory (Montr\'eal, QC, 2000), Kluwer, Dordrecht, 221--237 (2001)

\bibitem{Sarason}
 Sarason, D.: Representing Measures for $R(X)$ and Their Green's Functions. J. Func. Anal.
\textbf{7}, 359--385  (1971)

\end{thebibliography}
\end{document}